\title{Optimal Trade Characterizations in Multi-Asset Crypto-Financial Markets}
\author{C. Escudero\thanks{Departamento de Matem\'aticas Fundamentales,
Universidad Nacional de Educaci\'on a Distancia, Madrid, Spain.
E-mail: cescudero@mat.uned.es.}
\and
F. Lara\thanks{Instituto de Alta Investigaci\'on (IAI), Universidad
de Tarapac\'a, Arica, Chile. E-mail: felipelaraobreque@gmail.com;
flarao@uta.cl. Web: www.felipelara.cl, ORCID-ID: 0000-0002-9965-0921.}
\and
M. Sama\thanks{Departamento de Matem\'atica Aplicada I,
Universidad Nacional de Educaci\'on a Distancia, Madrid, Spain.
E-mail: msama@ind.uned.es.}
}
\date{\small \emph{\today}}
\numberwithin{equation}{section}
\newcommand{\interior}{\mathop{\rm int}\nolimits}
\newcommand{\BE}{\begin{equation}}
\newcommand{\EE}{\end{equation}}
\newcommand{\BEQ}{\begin{eqnarray}}
\newcommand{\EEQ}{\end{eqnarray}}
\newcommand{\BD}{\begin{displaystyle}}
\newcommand{\ED}{\end{displaystyle}}
\newcommand{\BC}{\begin{center}}
\newcommand{\EC}{\end{center}}
\newtheorem{thr}{Theorem}[section]
\newtheorem{defn}{Definition}[section]
\newtheorem{pr}{Proposition}[section]
\newtheorem{co}{Corollary}[section]
\def\interior{\operatornamewithlimits{int}}
\def\cone{\operatornamewithlimits{cone}}
\def\cone{\operatornamewithlimits{cone}}
\newtheorem{corollary}{Corollary}[section]
\newtheorem{remark}{Remark}[section]
\begin{document}

\maketitle

\begin{abstract}

\noindent {\bf Abstract.}
This work focuses on the mathematical study of constant function market makers.
We rigorously establish the conditions for optimal trading under the assumption of a
quasilinear, but not necessarily convex (or concave), trade function. This generalizes previous results that used convexity, and also guarantees the robustness against
arbitrage of so-designed automatic market makers. The theoretical results are
illustrated by families of examples given by generalized means, and also by numerical
simulations in certain concrete cases. These simulations along with the mathematical
analysis suggest that the quasilinear-trade-function based automatic market makers
might replicate the functioning of those based on convex functions, in particular
regarding their resilience to arbitrage.

\medskip

\noindent{\small \emph{Keywords}: Cryptofinance; Optimal trading; Decentralized markets, Automatic market makers;
Nonconvex optimization; Quasiconvexity.}

\end{abstract}

\bigskip
\bigskip

\section{Introduction}

Cryptocurrencies have had a deep impact in the XXIst century world. The Bitcoin white paper \cite{SN} has not only given rise to an asset which market price
has grown to several dozens of thousands of dollars, but has also been cited tens of thousands of times. Its impact has then ranged from financial practice to academic research. The rise of cryptocurrencies has also generated a debate about how finance
can be evolved accordingly. This is the starting point
of the revolutionary decentralized finance (DeFi).

Automatic Market Makers (AMMs) have their own status within decen\-tra\-li\-zed
finance.
The were born to provide an automated liquidity provision, along with trading services,
on blockchain networks. Nowadays, some AMMs such as Uniswap \cite{AZS}, Balancer \cite{MM}, and Curve \cite{egorov19}
enjoy a huge popularity among practitioners. Their main functionality is to enable decentralized exchange (DEX) of digital assets. This, in turn, facilitates trading,
lending, and yield farming.

An AMM is a smart contract protocol that permits to trade cryptoassets without the need of an order book. They are based on exchange algorithms that,
together with liquidity pools, set the trading framework. Each algorithm determines how the corresponding AMM works, as it is responsible of tuning the
asset prices, providing liquidity, and the market efficiency. Its functioning, in consequence, differs sharply from that of traditional centralized
exchanges, which match the orders emitted by buyers and sellers.

Several advantages and disadvantages can be associated with AMMs. On the positive
side, they are assumed to facilitate the democratization
of financial markets. Any trader can execute exchanges according to what is set by the predetermined algorithm, or either deposit digital assets
in the liquidity pools for a fee. Therefore, AMMs reduce the need to rely on intermediaries and counterpart risk if compared to centralized exchanges.
Nevertheless, AMM users can also experience a number of negative aspects, such as impermanent loss, front-running, and slippage. The design of
AMM protocols, and in particular of the exchange algorithms, is therefore key to improve their features and to diminish their handicaps.

This work is devoted to the construction of a general mathematical framework for the theoretical design of exchange algorithms.
We depart from the model in~\cite{angeris2022constant} to consider the optimization:
\begin{equation}\label{for:intro}
 \begin{array}{lll}
  & \text{maximize} & U(x-y) \\
  & \text{subject to} & \varphi (R+\Gamma y-x)=\varphi (R)\text{, } \\
  & \, & R + \Gamma y-x \geq 0\text{, } \\
  & \, & R\geq x\geq 0\text{, }y\geq 0.
 \end{array}%
\end{equation}
Herein $U:\mathbb{R}^n \longrightarrow \mathbb{R}$ is the utility function that
models the preferences of the trader (and which, of course, the trader seeks to
maximize), $n$ is the number of cryptoassets, $x,y \in \mathbb{R}^n$ are the
baskets of assets demanded from and sent to the AMM, respectively, $R$ are the
reserves of the AMM,
and $\Gamma$ is a $n \times n$ diagonal matrix that encodes the fees charged by the AMM to the trader for performing the trade. The function
$\varphi:\mathbb{R}^n \longrightarrow \mathbb{R}$ is the one that controls the exchange algorithm, as it should kept constant at every trade;
thus the name ``constant function market maker'' for this type of AMM
\cite{angeris2020} (see also \cite{H}). The rest of the constraints are there to
keep the non-negativity of the
reserves and of the quantities of assets traded; note that the vectorial inequalities should be understood componentwise. In the case of
Uniswap, for instance, the function $\varphi(\cdot)$ can be taken to be the geometric mean (or equivalently the product of the components).
This case along with several generalizations and variants have
been extensively studied in the literature, see for instance~\cite{angeris2020,angeris2022,angeris2023,angeris2021},
where this list is not meant to be exhaustive. We depart from these studies, and get inspired by them,
and more particularly of~\cite{angeris2022constant}, to carry out our analysis, which
is actually a generalization of some instances of the latter work.

\medskip

{\bf Our contribution:} We provide necessary and sufficient optimality conditions for
optimal trades in multi-asset crypto-financial markets beyond convexity, that is, when
both trading and utility functions satisfy generalized convexity assumptions. This improvement is more than theoretical since it has economical/financial motivations for the potential development of new crypto-financial markets based on AMM algorithms.
As it is well-known by the consumer pre\-fe\-ren\-ce theory \cite{D-1959}, utility
functions are naturally assumed quasiconcave because preferences that result in
concave utility functions are
often considered artificial (see, for instance, \cite{D-1959,MWG}). Furthermore, we
consider the trading function to be quasilinear, a generalized convexity assumption
which  includes the nonincreasing/nondecreasing convex (or concave) case,
preserves the convexity of problem \eqref{for:intro} and also the equality in the
constraint without been restricted to the simple linear (affine) case. Moreover, the
new optimality conditions that we establish are valid for any trade while,
until now, only the no-trade case (when the optimal pair is $(0,0)$, i.~e. it is best not
to trade whatsoever) was known in the literature for the convex case \cite{angeris2022constant}. Our proofs do not just cover the case of an uniform fee
as assumed in \cite{angeris2022constant}, but also extend the analysis to the case
of asset-dependent fees,
which is a generalization that can be implemented in real AMMs. Finally, we apply our results to construct new trading functions that
are generalized means. This obviously includes as particular cases the geometric and arithmetic means, which has been used in both academic
articles and real AMMs. Nevertheless, our results include the case of quasilinear generalized means that are neither convex nor concave, which
were not regarded in previous theoretical developments, and open the possibility to
start the design of new AMMs built upon this new type
of trading functions.

\medskip

The paper is organized as follows.
In section \ref{sec:02} we introduce the definitions and mathematical terminology that
will be employed from there onwards. In section \ref{sec:03} we
formulate the problem of optimal trading in an AMM as a mathematical optimization, making precise the above description. In section \ref{sec:04}, the results of the previous section are
employed to characterize the optimality of not executing any trade, that is, to characterize when no trading whatsoever is the optimal choice. This is
important to establish the robustness of the AMM to the action of arbitrageurs, since under those conditions no trades will be executed just for the sake of profit. In section \ref{sec:05}, we illustrate our theoretical developments with
particular examples. We focus on the case in which the trade function is neither convex nor concave, something that escapes previous theoretical frameworks.
We make these ideas concrete by employing generalized means, which we propose as trade functions, and combine theoretical and numerical analyses
to assess them. Finally, in section \ref{sec:06}, we draw our conclusions.

\section{Preliminaries}\label{sec:02}

We denote $\mathbb{R}_{+} := [0, + \infty[$ and $\mathbb{R}_{++}
:= \, ]0, + \infty[$. Hence, $\mathbb{R}^{n}_{+} := [0, + \infty[ \, \times
\ldots \times [0, + \infty[$ and $\mathbb{R}^{n}_{++} := \, ]0, + \infty[
\times \ldots \times \,  ]0, + \infty[$ ($n$ times). We used the usual
notations $\geq$ componentwise (see \cite{ehrgott2005multicriteria})
\begin{align*}
 & ~~ x \geq y ~ \Longleftrightarrow ~ x_{i} \geq y_{i}, \\
 & ~~ x \gneq y ~ \Longleftrightarrow ~ x_{i} \geq y_{i}, x \neq y, \\
 & x >> y ~ \Longleftrightarrow ~ x_{i} > y_{i},
\end{align*}
for every $i=1,...,n$.

Let $K \subseteq \mathbb{R}^{n}$ be nonempty. Then the set $K^{\ast}
\subseteq \mathbb{R}^{n}$ is the polar (positive) cone of $K$ defined by
\begin{equation} \label{polar:cone}
 K^{\ast} := \{q \in \mathbb{R}^{n}: ~ \langle q, p \rangle \geq 0, ~
 \forall ~ p \in K\}.
\end{equation}

Given any extended-valued function $h: \mathbb{R}^{n} \rightarrow
\overline{\mathbb{R}} := \mathbb{R} \cup \{\pm \infty\}$, the effective
domain of $h$ is defined by ${\rm dom}\,h := \{x \in \mathbb{R}^{n}:
h(x) < + \infty \}$. It is said that $h$ is proper if ${\rm dom}\,h$ is
nonempty and $h(x) > - \infty$ for all $x \in \mathbb{R}^{n}$. The notion
of properness is important when dealing with minimization pro\-blems.

It is indicated by ${\rm epi}\,h := \{(x,t) \in \mathbb{R}^{n} \times
\mathbb{R}: h(x) \leq t\}$ the epigraph of $h$, by $S_{\lambda} (h) :=
\{x \in \mathbb{R}^{n}: h(x) \leq \lambda\}$ (resp. $S^{<}_{\lambda}
(h) := \{u \in \mathbb{R}^{n}: ~ h(u) < \lambda\}$ the sublevel (resp.
strict sublevel) set of $h$ at the height $\lambda \in \mathbb{R}$, by
 $W_{\lambda} (h) := \{x \in \mathbb{R}^{n}: h(x) \geq \lambda\}$ (resp. $W^{<}_{\lambda} (h) := \{u \in \mathbb{R}^{n}: ~ h(u) > \lambda\}$
the upper level (resp. strict upper level) set of $h$ at the height $\lambda
\in \mathbb{R}$, and by ${\rm argmin}_{\mathbb{R}^{n}} h$ the set
of all minimal points of $h$.
%A function $h$ is lower se\-mi\-con\-ti\-nuous (lsc henceforth) at $x_{0}
%\in \mathbb{R}^{n}$ if for any sequence $\{x_k\}_{k} \in \mathbb{R}^{n}$
%with $x_k \rightarrow x_{0}$, $h(x_{0}) \leq \liminf_{k \rightarrow + \infty}
%h(x_k)$. Furthermore, the current convention $\sup_{\emptyset} h
%:= - \infty$ and $\inf_{\emptyset} h := + \infty$ is adopted.

A function $h$ with convex domain is said to be
\begin{itemize}
 \item[$(a)$] convex if, given any $x, y \in \mathrm{dom}\,h$, then
 \begin{equation}\label{def:convex}
  h(\lambda x + (1-\lambda)y) \leq \lambda h(x) + (1 - \lambda) h(y),
  ~ \forall ~ \lambda \in [0, 1],
 \end{equation}

% \item[$(b)$] strongly convex on ${\rm dom}\,h$ with modulus
% $\gamma_{h} \in \, ]0, + \infty[$ if for all $x, y \in \mathrm{dom}\,h$
% and all $\lambda \in[0, 1]$, we have
% \begin{equation}\label{strong:convex}
%  h(\lambda y + (1-\lambda)x) \leq \lambda h(y) + (1-\lambda) h(x) -
%  \lambda (1 - \lambda) \frac{\gamma_{h}}{2} \lVert x - y \rVert^{2},
% \end{equation}

 \item[$(b)$] quasiconvex if, given any $x, y \in \mathrm{dom}\,h$, then
 \begin{equation}\label{def:qcx}
  h(\lambda x + (1-\lambda) y) \leq \max \{h(x), h(y)\}, ~ \forall ~
  \lambda \in [0, 1],
 \end{equation}
% \item[$(d)$] strongly quasiconvex on ${\rm dom}\,h$ with modulus
% $\gamma_{h} \in \, ]0, + \infty[$ if for all $x, y \in \mathrm{dom}\,h$
% and all $\lambda \in [0, 1]$, we have
% \begin{equation}\label{strong:quasiconvex}
%  h(\lambda y + (1-\lambda)x) \leq \max \{h(y), h(x)\} - \lambda(1 -
%  \lambda) \frac{\gamma_{h}}{2} \lVert x - y \rVert^{2}.
% \end{equation}
\end{itemize}
 \noindent It is said that $h$ is strictly convex (resp. strictly quasiconvex)
 if the inequa\-li\-ty in \eqref{def:convex} (resp. \eqref{def:qcx}) is strict
 whenever $x \neq y$ and $\lambda \in \, ]0, 1[$. Every convex function is
 quasiconvex, but the reverse statement does not hold as the function
 $h(x) = x^{3}$ shows.   Recall that
 \begin{align*}
  h ~ \mathrm{is ~ convex} & \Longleftrightarrow \, \mathrm{epi}\,h ~
  \mathrm{is ~ a ~ convex ~ set;}\\
  h ~ \mathrm{is ~ quasiconvex} & \Longleftrightarrow \, S_{\lambda} (h) ~
  \mathrm{is ~ a ~ convex ~ set ~ for ~ all ~ } \lambda \in \mathbb{R}.
 \end{align*}

Quasiconvex functions appear in many applications from different fields
as, for instance, in Economics and Financial Theory, especially in consumer
preference theory (see \cite{D-1959,MWG}), since quasiconcavity is the
mathematical formulation of the natural assumption of a {\it tendency to
diversification} on the consumers.

It is said that $h$ is quasilinear if $h$ is quasiconvex and $-h$ is quasiconvex.
As a consequence,  its sublevel set $S_{\lambda} (h)$ and its upper level
sets $W_{\lambda} (h)$  are convex for all $\lambda \in \mathbb{R}$ (see
\cite[Theorem 3.3.1]{CMA}).
Note that every nonincreasing/nondecreasing convex (or concave)
function is quasilinear, because all its sublevel and upper level sets are convex.

Let $K \subseteq \mathbb{R}^{n}$ be a convex set and $h: K \rightarrow
\mathbb{R}$ be a differentiable function. Then the following assertions holds:
\begin{itemize}
 \item[$(i)$] $h$ is quasiconvex if and only if for every $x, y \in K$, we have
 (see \cite{AE} and \cite[Theorem 3.11]{ADSZ}) that
\begin{equation}\label{char:AE}
 h(x) \leq h(y) ~ \Longrightarrow ~ \langle \nabla h(y), x - y \rangle
 \leq 0;
\end{equation}

 \item[$(ii)$] $h$ is quasilinear if and only if for every $x, y \in K$, we have
 (see \cite[Theorem 3.3.6]{CMA})
 \begin{equation}\label{char:quasilinear}
  h(x) = h(y) ~ \Longrightarrow ~ \langle \nabla h(y), x - y \rangle
  = 0.
 \end{equation}
 \end{itemize}

 Let $h: \mathbb{R}^{n} \rightarrow \mathbb{R}$ be a differentiable
 function. Then $h$ is said to be pseudoconvex (see \cite{Manga}) if
\begin{equation}\label{pseudoconvex}
 h(x) < h(y) ~ \Longrightarrow ~ \langle \nabla h(y), x - y \rangle < 0.
\end{equation}
A function $h$ is pseudoconcave if $-h$ is pseudoconvex.

If $h$ is pseudoconvex, then every local minimum is global minimum
\cite[Theorem 3.2.5]{CMA} and, as a consequence, if $x$ is not a local
minimum point of a pseudoconvex function $h$, then
\begin{equation}\label{neatly:qcx}
 {\rm int}(S_{h(x)} (h)) = S^{<}_{h(x)} (h).
\end{equation}

Fur a further study on generalized convexity we refer to
\cite{ADSZ,AE,CMA,Manga,SZ} among others.

\section{The Optimization Problem}\label{sec:03}

Based on \cite{angeris2022constant}, the following optimization problem,
modeling how to choose a valid trade, is considered
\begin{equation*}
 \begin{array}{lll}
  (Q): & \text{maximize} & U(x-y) \\
  & \text{subject to} & \varphi (R+\Gamma y-x)=\varphi (R)\text{, }R +
  \Gamma y-x \geq 0\text{, }R\geq x\geq 0\text{, }y\geq 0.%
 \end{array}%
\end{equation*}
Here $\varphi: \mathbb{R}^{n}_{+} \rightarrow \mathbb{R}_{+}$ is the
trading function  while $U: \mathbb{R}^{n}\rightarrow \mathbb{R}$ is the
utility that the trader want to maximize, $x, y \in \mathbb{R}^{n}_{+}$
correspond with the given (tender)
and the received basket. If fact, this model is a generalization of the
model presented in \cite{angeris2022constant}, since it considers a diagonal
matrix $\Gamma =\left\{ \gamma _{i}\delta _{ij}\right\} _{1\leq i,j\leq n}$,
here $\delta_{ij}$ is the usual Kronecker delta ($\delta _{ii}=1,$ $\delta_{ij} =
1$ for $i\neq j$), the scalar $0 < \gamma _{i} < 1$ represent
the positive discount rate to the asset $i$, while $R \in \mathbb{R}_{+}^{n}$
is the reserve of available assets. Without loss of generality, we assume that
$R>0$ and
\begin{align}
 & \nabla U(x) \geq 0, ~ \forall ~ x \in \mathbb{R}^{n}, \label{200224a} \\
 & \nabla \varphi(x) \gneq 0, ~ \forall ~ x > 0. \label{200224b}
\end{align}
Note that \eqref{200224b} implies that $\varphi $ is strictly increasing with
respect to at least one component and, in particular,
\begin{equation}\label{strict:increa}
 x \geq y ~ \Longrightarrow ~ \varphi (x) \geq \varphi (y), ~ \forall ~
 x, y > 0.
\end{equation}

We can rewritte the problem in the following abstract way
\begin{equation*}
 \begin{array}{lcc}
  (Q): & \text{maximize} & f(x,y) \\
  & \text{s.t.} & h(x,y)=0\text{, }g(x,y)\leq 0,\text{ }\left( x,y\right) \in
  \hat{S}.%
 \end{array}%
\end{equation*}
where the maps $f, h: \mathbb{R}^{n} \times \mathbb{R}^{n} \rightarrow
\mathbb{R}$, $g: \mathbb{R}^{n} \times \mathbb{R}^{n} \rightarrow \mathbb{R}^{n}$ and the convex set $\hat{S}$ are defined by
\begin{equation*}
\begin{array}{l}
f(x,y)=U(x-y), \\
h(x,y)=\varphi (R+\Gamma y-x)-\varphi (R), \\
g(x,y)=-R-\Gamma y+x, \\
\hat{S}=\{(x,y)\in \mathbb{R}^{n}\times \mathbb{R}^{n}:\text{ }R\geq x\geq 0%
\text{, }y\geq 0\}.
\end{array}
\end{equation*}

This is an optimization problem with an equality and inequality
contraint and convex contraints on the variable. While natural assumptions
can be given to  $U$ and $\varphi$, we will assume that the
maps are continuously differentiable.

We first study the solvability of this problem in the general case when
the objective map is continuous and the feasible set is nonvoid and compact,
therefore there exists a solution $(\bar{x}, \bar{y})$ to $(Q)$ by applying
known results as, for instance, \cite[Thr 2.3]{jahn2020introduction}.
Furthermore we assume the following complementary condition on
$(\bar{x}, \bar{y})$
\begin{equation}\label{def:comple}
 0 \leq \bar{x} \bot \bar{y}\geq 0 ~ \Longleftrightarrow ~ \bar{x} \geq 0, \,
 \bar{y} \geq 0\text{, }\bar{x}_{i} \bar{y}_{j} = 0, ~ \forall ~ i=1,...,n.
\end{equation}
In the context of our appplication, this is a natural assumption since it
corres\-ponds with the non-overlapping support of valid tender and receive
baskets (see \cite[Section 3]{angeris2022constant}). In the following we
understand that $U$ is strongly increasing when $x \gneq y$ implies
$U(x) > U(y)$.

\begin{pr}\label{230224}
 There exists a solution $(\bar{x}, \bar{y}) $ of $(Q)$. Furthermore,
 if $U$ is strongly increasing, then $0\leq \bar{x}\bot \bar{y}\geq 0$.
\end{pr}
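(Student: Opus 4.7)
The feasible set is nonempty because $(x,y)=(0,0)$ fulfills all four constraints trivially. It is closed as the intersection of the box $\hat S$ (itself closed) with the preimages of the closed sets $\{0\}$ and $\mathbb{R}^{n}_{+}$ under the continuous maps $h$ and $-g$, and its compactness is part of the standing hypothesis invoked just before the statement. Since $f(x,y)=U(x-y)$ is continuous, the Weierstrass theorem \cite[Thr 2.3]{jahn2020introduction} already cited in the preamble yields the existence of a maximizer $(\bar x,\bar y)$.

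\textbf{Complementarity.} For the second part the plan is to argue by contradiction: assume that $U$ is strongly increasing but that, against the desired conclusion, some index $i\in\{1,\dots,n\}$ satisfies $\bar x_i>0$ and $\bar y_i>0$ simultaneously. I would then perturb only the $i$-th components by setting
\[
\tilde x_i=\bar x_i-\gamma_i\epsilon,\qquad \tilde y_i=\bar y_i-\epsilon,
\]
for $\epsilon>0$ sufficiently small, leaving every other coordinate equal to that of $(\bar x,\bar y)$. This direction is engineered so that $R_i+\gamma_i\tilde y_i-\tilde x_i=R_i+\gamma_i\bar y_i-\bar x_i$, whence the full vector $R+\Gamma\tilde y-\tilde x$ coincides exactly with $R+\Gamma\bar y-\bar x$. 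Consequently the equality $\varphi(R+\Gamma\tilde y-\tilde x)=\varphi(R)$ and the inequality $R+\Gamma\tilde y-\tilde x\geq 0$ are inherited automatically, while the box constraints $0\leq\tilde x\leq R$ and $\tilde y\geq 0$ persist for all sufficiently small $\epsilon$ by the assumed strict positivity of $\bar x_i$ and $\bar y_i$. The crucial computation is then
\[
(\tilde x_i-\tilde y_i)-(\bar x_i-\bar y_i)=\epsilon(1-\gamma_i)>0,
\]
since $\gamma_i<1$; as the other components of $\tilde x-\tilde y$ agree with those of $\bar x-\bar y$, this gives $\tilde x-\tilde y\gneq\bar x-\bar y$, and strong monotonicity of $U$ yields $U(\tilde x-\tilde y)>U(\bar x-\bar y)$, contradicting the optimality of $(\bar x,\bar y)$.

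\textbf{Main obstacle.} The delicate point is preserving the equality constraint $\varphi(R+\Gamma y-x)=\varphi(R)$ under the perturbation, which in general would require a descent along the level set of $\varphi$ and hence some implicit-function-type information about $\nabla\varphi$. The approach sketched above side-steps this entirely by moving along the one-dimensional direction that leaves the argument of $\varphi$ invariant, so neither regularity nor any level-set geometry of $\varphi$ is needed. The whole conclusion then rests on the strict fee inequality $0<\gamma_i<1$, which produces the slack $\epsilon(1-\gamma_i)>0$ that makes the net trade in asset $i$ strictly better; the argument would collapse if $\gamma_i=1$, consistently with the intuition that, absent fees, simultaneously sending and receiving the same asset is not strictly suboptimal.
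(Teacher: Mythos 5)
Your proof is correct and follows essentially the route the paper intends: existence via the Weierstrass theorem \cite[Thr 2.3]{jahn2020introduction} under the standing compactness assumption on the feasible set, and complementarity via the fee-slack perturbation argument that the paper delegates to \cite[Section 3]{angeris2022constant} rather than writing out. Your explicit perturbation $\tilde x_i=\bar x_i-\gamma_i\epsilon$, $\tilde y_i=\bar y_i-\epsilon$, which leaves $R+\Gamma y-x$ invariant and gains $\epsilon(1-\gamma_i)>0$ in the traded net, is exactly the right way to make that omitted step rigorous.
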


Note that in virtue of \cite[Section 3]{angeris2022constant},
the complementary condition also justifies the constraint $x\leq R$ in $(Q)$.

\subsection{Necessary Conditions}\label{subsec:3-1}

A necessary condition is given by a standard multiplier rule
\cite[Thr 5.3]{jahn2020introduction}: Assume $(\bar{x}, \bar{y})$ solves
problem $(Q)$. If the following property (Kurcyusz-Robinson-Zowe constraint
qualification) is verified
\begin{equation}
\left(
\begin{array}{c}
\nabla g(\bar{x},\bar{y}) \\
\nabla h(\bar{x},\bar{y})%
\end{array}%
\right) \cone(\hat{S}-(\bar{x},\bar{y}))+\cone\left(
\begin{array}{c}
\mathbb{R}_{+}^{n}+\left\{ g(\bar{x},\bar{y})\right\} \\
0%
\end{array}%
\right) =\mathbb{R}^{n}\times \mathbb{R},  \label{271023}
\end{equation}%
then the following multiplier rule is verified: Find $(\bar{x}, \bar{y}, \mu,
\lambda) \in \mathbb{R}^{n} \times \mathbb{R}^{n} \times
\mathbb{R}_{+}^{n}\times \mathbb{R}$ such that
\begin{equation}
 \begin{array}{l}
 \left( -\nabla f(\bar{x},\bar{y})+\mu ^{T} \nabla g(\bar{x}, \bar{y}) +
 \lambda \nabla h(\bar{x}, \bar{y}) \right) (x - \bar{x}, y - \bar{y}) \geq 0
 \text{ for every } x, y \in \hat{S}, \\
 \mu^{T} g(\bar{x}, \bar{y}) \leq 0 \\
 h(\bar{x}, \bar{y})=0. \label{2202243}
 \end{array}
\end{equation}

In the following result, we present a general necessary condition for
complementary solutions of problem $(Q)$. In this sense, given $x \in
\mathbb{R}^{n}$, we will use the notation $x_{-i} \equiv
(x_{1},...,x_{i-1,}x_{i+1},...,x_{n})$ such that $(x_{i},x_{-i}) = x$.

\begin{thr} \label{250224}
 Let $U$ and $\varphi$ be continuously differentiable functions such that
 conditions \eqref{200224a}  and \eqref{200224b} hold, and assume that
 there exists a complementary solution $(\bar{x},\bar{y})\in \hat{S}$,
 $0 \leq \bar{x} \bot \bar{y}\geq 0$, solving $(Q).$ Then, there exists a
 positive scalar $\alpha \geq 0$ such that the following conditions are verified:
\begin{small}
\begin{subequations} \label{2202245}
\begin{align}
\dfrac{\partial U}{\partial x_{i}}(\bar{x}_{i},\bar{x}_{-i}-\bar{%
y}_{-i})=\alpha \dfrac{\partial \varphi }{\partial x_{i}}(R_{i}-\bar{x}%
_{i},R_{-i}+\gamma _{-i}\bar{y}_{-i}-\bar{x}_{-i})\text{ when }R_{i}\geq\bar{x}%
_{i}>0\text{, }\bar{y}_{i}=0\text{,} \label{2202245i_ii} \medskip \\
\dfrac{\partial U}{\partial x_{i}}(\bar{y}_{i},\bar{x}_{-i}-%
\bar{y}_{-i})=\alpha \gamma _{i}\dfrac{\partial \varphi }{\partial x_{i}}%
(R_{i}+\gamma _{i}\bar{y}_{i},R_{-i}+\gamma _{-i}\bar{y}_{-i}-\bar{x}_{-i})%
\text{ when }\bar{x}_{i}=0\text{, }\bar{y}_{i}>0.\text{ } \label{2202245iii} \medskip  \\
\alpha \dfrac{\partial \varphi }{\partial x_{i}}(R)\geq \dfrac{%
\partial U}{\partial x_{i}}(0)\geq \alpha \gamma _{i}\dfrac{\partial \varphi
}{\partial x_{i}}(R)\text{ when }\bar{x}_{i}=\bar{y}_{i}=0,\label{2202245iv} \medskip  \\
\varphi (R+\Gamma \bar{y}-\bar{x})=\varphi (R).\label{2202245v}
\end{align}
\end{subequations}
\end{small}
\end{thr}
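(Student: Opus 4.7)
The natural route is to apply the multiplier rule already cited in the paragraph preceding the theorem (i.e., the Kurcyusz--Robinson--Zowe multiplier rule from \cite[Thr 5.3]{jahn2020introduction}), reduce the resulting variational inequality to the componentwise level, and then case-split according to the complementarity $0\le\bar{x}\bot\bar{y}\ge 0$. So the first task is to verify the constraint qualification \eqref{271023} for $(Q)$ at $(\bar{x},\bar{y})$. The gradient of the scalar equality constraint is $(-\nabla\varphi(\bar{z}),\Gamma\nabla\varphi(\bar{z}))$, where $\bar{z}:=R+\Gamma\bar{y}-\bar{x}$, and by \eqref{200224b} combined with $R>0$ this vector is nonzero; together with the product/box structure of $\hat{S}$ and of the inequality constraints $g\le 0$, the surjectivity required by \eqref{271023} follows by standard arguments.

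Once the multiplier rule delivers $\mu\in\mathbb{R}^{n}_{+}$ and $\lambda\in\mathbb{R}$ satisfying \eqref{2202243}, the next step is to exploit that $\hat{S}$ is the Cartesian product of the boxes $[0,R_{i}]$ and half-lines $[0,\infty[$, so the variational inequality in \eqref{2202243} decomposes into $2n$ independent scalar conditions. Writing $\bar{w}:=\bar{x}-\bar{y}$ and using the gradients
\begin{align*}
\nabla_{x_{i}}(-f+\mu^{T}g+\lambda h)(\bar{x},\bar{y}) &= -\tfrac{\partial U}{\partial x_{i}}(\bar{w})+\mu_{i}-\lambda\tfrac{\partial\varphi}{\partial x_{i}}(\bar{z}),\\
\nabla_{y_{i}}(-f+\mu^{T}g+\lambda h)(\bar{x},\bar{y}) &= \tfrac{\partial U}{\partial x_{i}}(\bar{w})-\gamma_{i}\mu_{i}+\gamma_{i}\lambda\tfrac{\partial\varphi}{\partial x_{i}}(\bar{z}),
\end{align*}
one obtains, in each coordinate, an inequality on $x_{i}\in[0,R_{i}]$ and one on $y_{i}\ge 0$, which sharpen to equalities at interior points.

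The case analysis then closes the proof. For a given $i$, complementarity forces one of three situations. If $R_{i}\ge\bar{x}_{i}>0$ and $\bar{y}_{i}=0$: interiority in $x_{i}$ gives an equality in the $x_{i}$-FOC; complementary slackness $\mu_{i}g_{i}(\bar{x},\bar{y})=0$ (extracted from $\mu\ge 0$, $g\le 0$ and $\mu^{T}g\le 0$) together with $g_{i}<0$ when $\bar{x}_{i}<R_{i}$ kills $\mu_{i}$, yielding \eqref{2202245i_ii} with $\alpha:=-\lambda$. If $\bar{x}_{i}=0$ and $\bar{y}_{i}>0$: interiority in $y_{i}$ gives the equality \eqref{2202245iii}, and again $g_{i}<0$ forces $\mu_{i}=0$, allowing $\gamma_{i}$ to factor out cleanly. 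If $\bar{x}_{i}=\bar{y}_{i}=0$: $R>0$ still gives $g_{i}<0$ so $\mu_{i}=0$, and the two boundary FOCs become the opposing inequalities \eqref{2202245iv}. The sign $\alpha\ge 0$ is then read off from any case-(i) or case-(ii) equation by \eqref{200224a}--\eqref{200224b}; if every coordinate falls in case (iii), $\alpha\ge 0$ is simply a free choice consistent with \eqref{2202245iv}.

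The main technical obstacle is the verification of the KRZ constraint qualification \eqref{271023} at points where some coordinates are on the boundary of $\hat{S}$, in particular when $\bar{x}_{i}=R_{i}$ forces $\bar{z}_{i}=0$ and makes $\partial_{i}\varphi(\bar{z})$ delicate under the assumption $\nabla\varphi(x)\gneq 0$ for $x>0$; here one must argue that at least one coordinate of $\nabla\varphi(\bar{z})$ is strictly positive so that the equality constraint gradient is nontrivial. Once that is secured, the rest of the argument is bookkeeping through the product structure.
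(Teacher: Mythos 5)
Your route is the paper's route: verify the Kurcyusz--Robinson--Zowe constraint qualification, invoke the multiplier rule \eqref{2202243}, use $\mu\ge 0$, $g(\bar x,\bar y)\le 0$ and $\mu^{T}g(\bar x,\bar y)\le 0$ to kill the multipliers $\mu_i$ on coordinates where the reserve constraint is inactive, decompose the variational inequality over the product structure of $\hat S$, and read off \eqref{2202245} case by case with $\alpha=-\lambda$. The case analysis and the gradient bookkeeping in your plan match the paper's.

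The one place where your proposal is materially thinner than the paper is precisely the step you defer to ``standard arguments,'' and it does not follow by standard arguments alone. The delicate coordinates are not those where $\nabla\varphi(\bar R)$ might vanish, but those with $\bar x_i=R_i$, where $\bar R_i=0$: there the cone $\mathbb{R}^{n}_{+}+\{g(\bar x,\bar y)\}=\mathbb{R}^{n}_{+}-\bar R$ contributes only nonnegative numbers in the $i$-th slot, so surjectivity onto all of $\mathbb{R}^{n}\times\mathbb{R}$ cannot come from the slack cone and must be extracted from the directions $\cone(\hat S-(\bar x,\bar y))$. The paper does this by partitioning the indices into $I^{a},I^{b},I^{c},I^{d}$ and, for $i\in I^{a}$ with a target $z_i<0$, explicitly choosing $x_i=y_i=R_i$ and a positive scalar $\theta$ to generate the negative direction; this construction is the actual content of the CQ verification and needs to be written out. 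A second, smaller gap: your case (i) argument (interiority of $x_i$ plus $g_i<0$ to force $\mu_i=0$) only covers $0<\bar x_i<R_i$; when $\bar x_i=R_i$ both interiority and strict slackness fail, $\mu_i$ survives, and the first-order condition yields only a one-sided inequality rather than the equality in \eqref{2202245i_ii} --- this subcase has to be treated separately (the paper handles it as the set $I^{a}$ and obtains $\frac{\partial U}{\partial x_i}\ge\alpha\frac{\partial\varphi}{\partial x_i}$ there). Everything else in your plan is correct bookkeeping.
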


\begin{proof}
 In first place, by a direct computation
\begin{equation*}
\begin{array}{l}
\nabla f(\bar{x},\bar{y})=\left(
\begin{array}{cc}
\nabla U(\bar{x}-\bar{y}) & -\nabla U(\bar{x}-\bar{y})%
\end{array}%
\right) \\
\nabla g(\bar{x},\bar{y})=\left(
\begin{array}{cc}
I_{n} & -\Gamma%
\end{array}%
\right) \\
\nabla h(\bar{x},\bar{y})=\left(
\begin{array}{cc}
-\bar{P}^{T} & \bar{P}^{T}\Gamma%
\end{array}%
\right)%
\end{array}%
\end{equation*}%
where $I_{n}\in \mathbb{R}^{n\times n}$ denotes the identity matrix, $\bar{R}
= R+\Gamma \bar{y}-\bar{x},$ and
\begin{equation*}
\bar{P}^{T} := \nabla \varphi (\bar{R}) = \nabla \varphi (R + \Gamma \bar{y}
- \bar{x}),
\end{equation*}
is considered as a column vector following a usual convention for vectors
while $\bar{P} \gneq 0$ by hypothesis \eqref{200224b}. This notation
reflects that $\bar{P}$ can be interpreted as a vector of prices.

We prove now that the constraint qualification \eqref{271023} is verified.
Condition \eqref{271023} is equivalent to the verification of the two
following conditions: For every $(z,t)\in \mathbb{R}^{n}\times \mathbb{R}$
we can take $\theta ,\beta \in \mathbb{R}_{+}$, $x,y\in \hat{S}$, $c,d\in
\mathbb{R}_{+}^{n},$
\begin{subequations}
\begin{align}
& z=\theta  \left[ (x-\bar{x})-\Gamma (y-\bar{y})\right] +\beta (c-\bar{R}),
\label{180224a} \\
& t=\theta  (\bar{P}^{T}(x-\bar{x})-\bar{P}^{T}\Gamma (y-\bar{y})).
\label{180224b}
\end{align}%
\eqref{180224b} holds if $\bar{P}\neq 0$, which follows from \eqref{200224b}%
. For the first equality, \eqref{180224a}, by the complementary condition,
either $\bar{R}_{i}=R_{i}-\bar{x}_{i}$ or $\bar{R}_{i}=R_{i}+\gamma _{i}\bar{%
y}_{i}>R_{i}>0$. In this sense, if we define
\end{subequations}
\begin{equation*}
\begin{array}{l}
I^{a}:=\{i\in \{1,...,n\}:\,\bar{x}_{i}=R_{i}\}, \\
I^{b}:=\{i\in \{1,...,n\}:\,0<\bar{x}_{i}<R_{i}\}, \\
I^{c}:=\{i\in \{1,...,n\}:\,0<\bar{y}_{i}\}\text{, }\  \\
I^{d}:=\{i\in \{1,...,n\}:\,0=\bar{x}_{i}=\bar{y}_{i}\},%
\end{array}%
\end{equation*}%
which is a partition on the index set $\{1,...,n\}$ and such that
\begin{equation*}
\bar{R}_{i}=\left\{
\begin{array}{lll}
0, & \text{if} & i\in I^{a}, \\
R_{i}-\bar{x}_{i}, & \text{if} & i\in I^{b}, \\
R_{i}+\gamma _{i}\bar{y}_{i}, & \text{if} & i\in I^{c}, \\
R_{i} & \text{if} & i\in I^{d}.%
\end{array}%
\right.
\end{equation*}

From this, the following useful property is straightforward
\begin{equation}  \label{190224}
\bar{R}_{i}=0 ~ \Longleftrightarrow ~ i \in I^{a}.
\end{equation}%
To prove \eqref{180224a} we consider two possibilities:

\begin{itemize}
\item If $I^{a}=\emptyset $,  then $0\leq \bar{x}_{i}<R_{i}$
for every $i\in \{1,...,n\}$, thus $\bar{R}_{i}>0$ and $\bar{R}\in \interior%
(\mathbb{R}_{+}^{n})$. Then for every $z\in \mathbb{R}^{n}$ we can take $%
\beta >0$ small enough such that $c:=\bar{R}+\frac{1}{\beta }z\in \mathbb{R}%
_{+}^{n}$. Hence, by taking $x=\bar{x}$, $y=\bar{y}$, we have
\begin{equation*}
  \left[ (\bar{x}-\bar{x})-\Gamma (\bar{y}-\bar{y})\right] +\beta (\bar{%
R}+\frac{1}{\beta }z-\bar{R})=z,
\end{equation*}%
and \eqref{180224a} holds. Note that this analysis is valid for every $%
\theta \geq 0$, which is fundamental for the second case.

\item If $I^{a}\neq \emptyset $, then we proceed as follows:
we apply the previous reasoning line when $i\notin I^{a}$, while in the
other case, we need to verify:
\begin{align}
z_{i}& =\theta ((x_{i}-\bar{x}_{i})-\gamma _{i}(y_{i}-\bar{y}_{i}))+\beta
(c_{i}-\bar{R}_{i}),~\forall ~i\in I^{a}  \notag \\
& =\theta ((x_{i}-R_{i})-\gamma _{i}y_{i})+\beta c_{i}~(\text{because }i\in
I^{a}).  \label{q1}
\end{align}%
If $z_{i}<0$, then we take $c_{i}=0$, $x_{i}=y_{i}=R_{i}$ and $\theta  =-%
\frac{z_{i}}{\gamma _{i}R_{i}}>0$; while if $z_{i}\geq 0$, then we take $%
x_{i}=R_{i}$, $y_{i}=0$, $\beta =1$ and $c_{i}=z_{i}$.
\end{itemize}

On the other hand, the necessary optimality conditions \eqref{2202243} are
given by
\begin{equation}
\begin{array}{l}
- \nabla U(\bar{x} - \bar{y})^{T} (x - \bar{x}) + \nabla U(\bar{x} - \bar{y}%
)^{T} (y - \bar{y}) + \mu ^{T} (x - \bar{x}) - \mu ^{T} \Gamma (y-\bar{y})
\\
- \lambda \bar{P}^{T} (x - \bar{x}) + \lambda \bar{P}^{T} \Gamma (y - \bar{y}%
) \geq 0, ~ \forall ~ x, y \in \hat{S}, \\
\mu^{T} (-\bar{R})=0, \\
\varphi (\bar{R}) = \varphi (R).%
\end{array}
\label{cond:holds}
\end{equation}

Applying \eqref{190224}, since $\mu ^{T}(-\bar{R})=0$ we have $%
\mu _{i}=0$ for every $i \not\in I^{a}$, thus
\begin{align}
\mu^{T} (x - \bar{x}) - \mu^{T} \Gamma (y - \bar{y}) & = \sum\limits_{i \in
I^{a}} \mu_{i} \left( x_{i} - \bar{x}_{i} \right) - \mu _{i} \gamma _{i}
\left( y_{i} - \bar{y}_{i} \right)  \notag \\
& = \sum\limits_{i \in I^{a}} \mu_{i} \left( x_{i} - R_{i} \right) - \mu
_{i} \gamma_{i} y_{i} \leq 0,  \label{q2}
\end{align}
for every $x_{i} \leq R_{i}$, $y_{i} \geq 0$. Therefore
\begin{align*}
& - \nabla U(\bar{x} - \bar{y})^{T} (x - \bar{x}) + \nabla U(\bar{x} - \bar{y%
})^{T} (y - \bar{y}) - \lambda \bar{P}^{T} (x - \bar{x}) + \lambda \bar{P}%
^{T} \Gamma (y - \bar{y}) \\
& \hspace{1.0cm} \geq -\mu^{T} (x - \bar{x}) - \mu^{T} \Gamma (y - \bar{y})
\geq 0, ~ \forall ~ x_{i} \leq R_{i}, ~ \forall ~ y_{i} \geq 0.
\end{align*}
Hence,
\begin{equation}
(- \nabla U(\bar{x} - \bar{y}) - \lambda \bar{P})^{T} (x - \bar{x}) +
(\nabla U(\bar{x} - \bar{y}) + \lambda \Gamma \bar{P})^{T} (y - \bar{y})
\geq 0, ~ \forall ~ x, y \in \hat{S}.  \label{sep:var}
\end{equation}

By taking $x_{i} = \bar{x}_{i}$ and $y_{i}=\bar{y}_{i}$ on each index
subset, condition \eqref{sep:var} can be equivalenty descomposed into four
conditions,
\begin{subequations}
\begin{align}
\sum\limits_{i \in I^{a}} \left( - \nabla U(\bar{x} - \bar{y})_{i} - \lambda
\bar{P}_{i} \right)   (x_{i} - R_{i}) + \left( \nabla U(\bar{x}
- \bar{y})_{i} + \lambda \gamma _{i} \bar{P}_{i} \right) y_{i} & \geq 0,
\label{1902245I} \\
\sum\limits_{i \in I^{b}} \left( - \nabla U (\bar{x} - \bar{y})_{i} -
\lambda \bar{P}_{i} \right) (x_{i} - \bar{x}_{i}) + \left( \nabla U (\bar{x}
- \bar{y})_{i} + \lambda \gamma_{i} \bar{P}_{i} \right) y_{i} & \geq 0,
\label{1902245II} \\
\sum\limits_{i\in I^{c}}\left( -\nabla U(\bar{x}-\bar{y})_{i}-\lambda \bar{P}%
_{i}\right) x_{i}+\left( \nabla U(\bar{x}-\bar{y})_{i}+\lambda \gamma _{i}%
\bar{P}_{i}\right) (y_{i}-\bar{y}_{i})& \geq 0\text{,}  \label{1902245III} \\
\sum\limits_{i\in I^{d}}\left( -\nabla U(\bar{x}-\bar{y})_{i}-\lambda \bar{P}%
_{i}\right) x_{i}+\left( \nabla U(\bar{x}-\bar{y})_{i}+\lambda \gamma _{i}%
\bar{P}_{i}\right) y_{i}& \geq 0\text{,}  \label{1902245IV}
\end{align}%
for every  $R_{i}\geq x_{i}\geq 0$, $y_{i}\geq 0$.

Let us now prove that these four conditions imply the conditions in the
optimality system \eqref{2202245}.

For the case \eqref{1902245II}, when $I^{b} \neq \emptyset$, the quantity
\begin{equation*}
\sum\limits_{i \in I^{b}} \left( - \nabla U (\bar{x} - \bar{y})_{i} -
\lambda  \bar{P}_{i} \right) (x_{i} - \bar{x}_{i}),
\end{equation*}
can be arbitrarily positive or negative as $\bar{x}_{i} \in \, ]0, R_{i}[$
and  $x_{i} \in [0, R_{i}]$ is arbitrary, hence necessarily $\sum\limits_{i
\in I^{b}}  \left( - \nabla U (\bar{x} - \bar{y})_{i} - \lambda \bar{P}_{i}
\right) (x_{i} -  \bar{x}_{i}) = 0$  for every $\bar{x}_{i} \in
\, ]0, R_{i}[$ and  every $x_{i} \in [0, R_{i}]$  and, since $\sum\limits_{i
\in I^{b}} \left(\nabla  U (\bar{x} - \bar{y})_{i} + \lambda \gamma _{i}
\bar{P}_{i} \right) y_{i} \geq 0$  for every $y_{i} \geq 0$, we obtain
\end{subequations}
\begin{equation*}
- \nabla U (\bar{x} - \bar{y})_{i} - \lambda \bar{P}_{i} = 0, ~ \nabla U (%
\bar{x} - \bar{y})_{i} + \lambda \gamma _{i} \bar{P}_{i} \geq 0, ~ \forall ~
i \in I^{b}.
\end{equation*}
From this, $-\nabla U(\bar{x}-\bar{y})_{i}=\lambda \bar{P}_{i}$, which
implies $\lambda \leq 0$ because $\bar{P}_{i}\geq 0$ and $\nabla U (\bar{x}
- \bar{y})_{i} \geq 0$ by assumptions \eqref{200224a} and \eqref{200224b}.
Furthermore, the second condition always holds, since $\nabla U (\bar{x} -
\bar{y})_{i} + \lambda \gamma_{i} \bar{P}_{i} = \left\vert \lambda
\right\vert \bar{P}_{i} - \left\vert \lambda \right\vert \gamma _{i} \bar{P}%
_{i} = \left\vert \lambda \right\vert \bar{P}_{i} (1 - \gamma _{i}) \geq 0$,
consequently
\begin{equation*}
\nabla U (\bar{x} - \bar{y})_{i} = \left\vert \lambda \right\vert \bar{P}%
_{i}, ~ \forall ~ i \in I^{b},
\end{equation*}
when $I^{b}\neq \emptyset $, and this implies condition \eqref{2202245i_ii}.

By following a similar reasoning, from \eqref{1902245III} we have $-\nabla U
(\bar{x} - \bar{y})_{i} - \lambda \bar{P}_{i} \geq 0$ and $\nabla U (\bar{x}
- \bar{y})_{i} = - \lambda \gamma _{i} \bar{P}_{i}$ for all $i \in I^{c}$,
so $\lambda \leq 0$ and $-\nabla U(\bar{x} - \bar{y})_{i} - \lambda \bar{P}%
_{i} = - \left\vert \lambda \right\vert \gamma _{i} \bar{P}_{i} + \left\vert
\lambda \right\vert \bar{P}_{i} \geq 0$, consequently \eqref{1902245III}
implies
\begin{align*}
& \nabla U (\bar{x} - \bar{y})_{i} = \left\vert \lambda \right\vert \gamma
_{i} \bar{P}_{i}, ~ \forall ~ i \in I^{c},
\end{align*}
when $I^{c}\neq \emptyset $, and \eqref{2202245iii} is verified.

In the same way, if $I^{d}\neq \emptyset $, then $\lambda \leq 0$, $\bar{x}
- \bar{y} = 0$ and \eqref{1902245IV} implies
\begin{align*}
& - \nabla U (\bar{x} - \bar{y})_{i} - \lambda \bar{P}_{i} \geq 0, ~ \nabla
U (\bar{x} - \bar{y})_{i} + \lambda \gamma _{i} \bar{P}_{i} \geq 0, \\
& \Longleftrightarrow ~ \left\vert \lambda \right\vert \bar{P}_{i} \geq
\nabla U (\bar{x} - \bar{y})_{i} \geq \left\vert \lambda \right\vert \gamma
_{i} \bar{P}_{i}, ~ \forall ~ i \in I^{d}.
\end{align*}

Furthermore, we necessarily have that one of the previous cases is verified,
i.e., $I^{b} \cup I^{c} \cup I^{d} \neq \emptyset$. Indeed, suppose for the
contrary that $\bar{x}_{i} = \bar{R}_{i}$ for every $i \in \{1,...,n\}$,
then $\varphi (0) = \varphi (R)$, a contradiction to \eqref{200224b}. Hence,
we can always consider that $\lambda \leq 0$ and \eqref{1902245I} implies
\begin{align*}
& \hspace{0.5cm} - \nabla U (\bar{x} - \bar{y})_{i} - \lambda \bar{P}_{i}
\leq 0, \, \nabla U (\bar{x} - \bar{y})_{i} + \lambda \gamma _{i} \bar{P}%
_{i} \geq 0 \\
& \Longrightarrow ~ \nabla U(\bar{x} - \bar{y})_{i} \geq \left\vert \lambda
\right\vert \bar{P}_{i} = \max \{\left\vert \lambda \right\vert \gamma _{i}
\bar{P}_{i}, \left\vert \lambda \right\vert \bar{P}_{i}\}, ~ \forall ~ i \in
I^{a},
\end{align*}
which corresponds with condition \eqref{2202245i_ii}. In this sense,
conditions \eqref{1902245I}, \eqref{1902245II} collapsed to %
\eqref{2202245i_ii}. Which proves the desired result.
\end{proof}

\begin{remark} \label{070324}
 From the proof, in Theorem \ref{250224} we can replace condition
 \eqref{200224a} by the following weaker assumption
 \begin{equation*}
  \nabla U(\bar{x}-\bar{y})\geq 0.
 \end{equation*}
\end{remark}

A direct consequence of Theorem \ref{250224} and Proposition \ref{230224} is
the following.

\begin{co}
 Let $U$ and $\varphi$ be continuously differentiable functions such that
 \eqref{200224b} is verified. If $U$ is strongly increasing, then every solution
 $(\bar{x}, \bar{y}) \in \hat{S}$ of (Q), verifies optimality system
 \eqref{2202245}.
\end{co}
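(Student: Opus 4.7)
The plan is to reduce the corollary to a direct combination of Proposition \ref{230224} and Theorem \ref{250224} (together with the weakening permitted by Remark \ref{070324}). Given an arbitrary solution $(\bar{x}, \bar{y}) \in \hat{S}$ of $(Q)$, two facts must be verified before Theorem \ref{250224} can be invoked: that $(\bar{x}, \bar{y})$ satisfies the complementary condition \eqref{def:comple}, and that $\nabla U(\bar{x} - \bar{y}) \geq 0$. Once both are in hand, the theorem's conclusion \eqref{2202245} follows directly, without any additional computation.

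For the complementarity, I will simply appeal to Proposition \ref{230224}: its second assertion states precisely that strong monotonicity of $U$ forces $0 \leq \bar{x} \bot \bar{y} \geq 0$ at any solution. For the gradient condition, which replaces the global assumption \eqref{200224a} in light of Remark \ref{070324}, it suffices to show that continuous differentiability plus strong monotonicity of $U$ imply $\nabla U(z) \geq 0$ for every $z \in \mathbb{R}^n$. The argument is short: for any fixed $z$ and any index $i$, the point $z + te_i$ satisfies $z + te_i \gneq z$ for each $t > 0$, hence $U(z + te_i) > U(z)$ by strong monotonicity; dividing by $t$ and letting $t \to 0^+$ yields $\partial U / \partial x_i(z) \geq 0$. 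Specializing $z = \bar{x} - \bar{y}$ gives exactly the hypothesis required by Remark \ref{070324}.

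With both prerequisites established, Theorem \ref{250224} (as weakened by Remark \ref{070324}) applies to $(\bar{x}, \bar{y})$, and the full optimality system \eqref{2202245i_ii}--\eqref{2202245v} is obtained with a single $\alpha \geq 0$. The only point requiring care is the logical quantification: the corollary asserts the conclusion for \emph{every} solution, and this is legitimate because both of the preliminary facts (complementarity from Proposition \ref{230224}, and pointwise nonnegativity of $\nabla U$ from strong monotonicity) hold for an arbitrary solution rather than for a selected one. I do not anticipate a substantive obstacle; the proof is essentially a citation of the two earlier results once the mild observation on $\nabla U$ is recorded.
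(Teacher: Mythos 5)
Your proposal is correct and follows exactly the route the paper intends: the corollary is stated as a direct consequence of Proposition \ref{230224} (which supplies complementarity for any solution under strong monotonicity) and Theorem \ref{250224} as relaxed by Remark \ref{070324}. Your explicit difference-quotient argument that strong monotonicity yields $\nabla U(z)\geq 0$ everywhere is a detail the paper leaves implicit, and it is needed here since \eqref{200224a} is not among the corollary's hypotheses; it is carried out correctly.
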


\subsection{Sufficient Conditions}\label{subsec:3-2}

In the following result, we provide sufficient conditions without convexity
assumptions, neither on the utility function $U$ nor the trade function
$\varphi$. To that end, let $S \subseteq \mathbb{R}^{n}$ and
$C \subseteq \mathbb{R}^{m}$ be two nonempty sets and $f:
\mathbb{R}^{n} \rightarrow \mathbb{R}^{m}$ be a differentiable mapping.
It is said that $f$ is $C$-quasiconvex at $\overline{x} \in S$ with respect to $S$
if for all $x \in S$ (see \cite[Definition 5.12]{jahn2020introduction}), the
following implicaitons holds:
\begin{equation}\label{C:quasiconvex}
 f(x) - f(\overline{x}) \in C ~ \Longrightarrow ~ f^{\prime} (\overline{x})
 (x - \overline{x}) \in C.
\end{equation}

The desired result is given below.

\begin{thr} \label{010124}
 Let $U$ and $\varphi $ be continuously differentiable functions such that $U$
 is pseudoconcave (thus $-U$ is pseudoconvex) and $\varphi $ is quasilinear. If
 $(\bar{x},\bar{y})\in \hat{S}$ verifies conditions  \eqref{2202245}, then it
 solves problem (Q).
\end{thr}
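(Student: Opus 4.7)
Fix an arbitrary feasible $(x, y) \in \hat{S}$ satisfying $\varphi(R + \Gamma y - x) = \varphi(R)$, and set $\bar{R} := R + \Gamma \bar{y} - \bar{x}$ and $\bar{P} := \nabla \varphi(\bar{R})$. The goal is to show $U(x - y) \leq U(\bar{x} - \bar{y})$. Since $-U$ is pseudoconvex, the contrapositive of \eqref{pseudoconvex} reduces the task to establishing the single scalar inequality
\[
 \langle \nabla U(\bar{x} - \bar{y}),\, (x - y) - (\bar{x} - \bar{y})\rangle \leq 0.
\]

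To prove this inequality, I would combine the KKT-type conditions \eqref{2202245} with a linearization of the trade constraint. Because $\varphi$ is quasilinear and $\varphi(R + \Gamma y - x) = \varphi(\bar{R})$, the characterization \eqref{char:quasilinear} applied at $\bar{R}$ yields
\[
 \langle \bar{P},\, \Gamma(y - \bar{y}) - (x - \bar{x})\rangle = 0.
\]
I would then split $\{1,\ldots,n\}$ into the partition $\{I^{a}, I^{b}, I^{c}, I^{d}\}$ introduced in the proof of Theorem~\ref{250224} and decompose the target inner product coordinatewise. Substituting $\nabla_{i}U = \alpha \bar{P}_{i}$ on $I^{a} \cup I^{b}$ via \eqref{2202245i_ii} and $\nabla_{i}U = \alpha \gamma_{i}\bar{P}_{i}$ on $I^{c}$ via \eqref{2202245iii}, and then exploiting the quasilinearity identity multiplied by $\alpha$ to replace the block $\sum_{I^{a}\cup I^{b}}\bar{P}_{i}(x_{i} - \bar{x}_{i})$ by $y$-terms plus $I^{c} \cup I^{d}$ contributions, all cross-terms in $\bar{x}_{i}$ and $\bar{y}_{i}$ should cancel and the whole sum collapses to
\[
 \alpha \sum_{I^{a}\cup I^{b}} \bar{P}_{i}y_{i}(\gamma_{i} - 1) + \alpha \sum_{I^{c}} \bar{P}_{i}x_{i}(\gamma_{i} - 1) + \sum_{I^{d}} \bigl[x_{i}(\nabla_{i}U - \alpha\bar{P}_{i}) - y_{i}(\nabla_{i}U - \alpha\gamma_{i}\bar{P}_{i})\bigr].
\]
Each of the three blocks is manifestly nonpositive: the first two because $0 < \gamma_{i} < 1$ and $x_{i}, y_{i}, \bar{P}_{i}, \alpha \geq 0$, while the third is nonpositive because the sandwich \eqref{2202245iv} forces $\nabla_{i}U - \alpha \bar{P}_{i} \leq 0$ and $\nabla_{i}U - \alpha \gamma_{i}\bar{P}_{i} \geq 0$.

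The main obstacle is the bookkeeping in this decomposition: the discount matrix $\Gamma$ breaks the symmetry between the $x$- and $y$-variables, so the quasilinearity identity has to be inserted with the right multiplier and carefully distributed over the four index sets in such a way that all cross-terms involving $\bar{x}_{i}, \bar{y}_{i}$ cancel, leaving only the clearly signed factors $(\gamma_{i} - 1)$ and the two sandwich differences. The feasibility constraints $0 \leq x_{i} \leq R_{i}$ and $y_{i} \geq 0$ then enter only in the final sign check. Once this scalar inequality is in hand, pseudoconcavity of $U$ delivers $U(x - y) \leq U(\bar{x} - \bar{y})$, and since $(x, y)$ was arbitrary, $(\bar{x}, \bar{y})$ solves $(Q)$.
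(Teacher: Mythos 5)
Your proof is correct, and I have checked the central cancellation: substituting $\nabla_i U(\bar x-\bar y)=\alpha\bar P_i$ on $I^a\cup I^b$, $\nabla_i U(\bar x-\bar y)=\alpha\gamma_i\bar P_i$ on $I^c$, and eliminating $\sum_{I^a\cup I^b}\bar P_i(x_i-\bar x_i)$ via the quasilinearity identity $\sum_i\bar P_i(x_i-\bar x_i)=\sum_i\gamma_i\bar P_i(y_i-\bar y_i)$ does leave exactly the three nonpositive blocks you display, so $\langle\nabla U(\bar x-\bar y),(x-y)-(\bar x-\bar y)\rangle\le 0$ and pseudoconvexity of $-U$ (in contrapositive form) finishes the argument. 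However, your route is genuinely different from the paper's. The paper does not perform this explicit coordinatewise computation at all: it feeds the multiplier system into the abstract sufficiency theory of Jahn (Theorem 5.14 and Corollary 5.15 of \cite{jahn2020introduction}), and the whole proof consists of verifying that the triple $(-f,g,h)$ is $\widehat C$-quasiconvex at $(\bar x,\bar y)$ for the cone $\widehat C=\mathbb{R}_-\backslash\{0\}\times\mathbb{R}^n\times\{0\}$ (the middle factor being all of $\mathbb{R}^n$ by a Hahn--Banach argument using $\bar R>0$); pseudoconvexity of $-U$ and the quasilinearity characterization \eqref{char:quasilinear} enter only to check the first and third components of that quasiconvexity condition. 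What the paper's route buys is brevity and a clean conceptual placement within a known sufficiency framework; what your route buys is a self-contained, elementary verification that makes visible exactly where each hypothesis is used (the factors $\gamma_i-1<0$, the sandwich \eqref{2202245iv}, the signs $\alpha,\bar P_i\ge 0$ from \eqref{200224b}), and it avoids relying on the paper's assertion that \eqref{2202245} implies the abstract system \eqref{2202243}, a step the paper attributes to the proof of Theorem \ref{250224} even though the implication established there runs in the opposite direction. The only points you should make explicit when writing this up are that the index sets $I^a,\dots,I^d$ partition $\{1,\dots,n\}$ precisely because the complementarity $0\le\bar x\perp\bar y\ge 0$ is implicit in the case structure of \eqref{2202245}, and that the comparison point $R+\Gamma y-x$ lies in $\mathbb{R}^n_+$ (by feasibility) so that \eqref{char:quasilinear} is legitimately applied on the convex domain of $\varphi$.
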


\begin{proof}
 In {Theorem \ref{250224}}, we have shown that if $(\bar{x}, \bar{y}) \in
 \hat{S}$ satisfies conditions \eqref{2202245}, then general conditions
 \eqref{2202243} are verified. Hence, we have relations $(5.22)$ and $(5.23)$
 in \cite[Theorem 5.14]{jahn2020introduction}.

 We only need to prove that $(-f,g,h)$ is $\widehat{C}$-quasiconvex at
 $(\bar{x}, \bar{y})$ on the set:
\begin{equation}
 \widehat{C} := \left( \mathbb{R}_{-} \backslash \{0\} \times \left( -
 \mathbb{R}_{+}^{n} + \mathrm{cone}\{-\bar{R}\} - \mathrm{cone}
 \{- \bar{R}\}\right) \times \{0\}\right),  \label{big:C}
\end{equation}
where we recall $\bar{R} = R + \Gamma \bar{y} - \bar{x}$ that is, we should
prove that
\begin{equation}
(-f,g,h)(x,y)-(-f,g,h)(\bar{x},\bar{y})\in \widehat{C}~\Longrightarrow
~(-f,g,h)^{\prime }(\bar{x},\bar{y})\left( (x,y)-(\bar{x},\bar{y})\right)
\in \widehat{C}.  \label{C:quasiconvex}
\end{equation}

We can simplify this expression. In this sense, since $\bar{R}\in \mathbb{R}%
_{+}^{n}$, $\mathrm{cone}\{-\bar{R}\}-\mathrm{cone}\{-\bar{R}\}={\langle
\bar{R}\rangle }$ is the subspace generated by $\bar{R}$, so
\begin{equation*}
\left( -\mathbb{R}_{+}^{n}+\mathrm{cone}\{-\bar{R}\}-\mathrm{cone}%
\{-R\}\right) =-\mathbb{R}_{+}^{n}+\langle \bar{R}\rangle =\mathbb{R}^{n}.
\end{equation*}
The last inequality is a consequence of the Hahn-Banach theorem.
${-\mathbb{R}_{+}^{n}+\langle \bar{R}\rangle }$ is a convex set with
nonempty interior, on the contrary, if ${-\mathbb{R}_{+}^{n} + \langle
\bar{R}\rangle \neq \mathbb{R}^{n}}$ there is some $\tilde{x} \in
{\mathbb{R}^{n}}$ such that $\tilde{x} \notin {-\mathbb{R}_{+}^{n} +
\langle \bar{R}\rangle}$. By Hahn-Banach theorem (see \cite[Theorem
C.2]{jahn2020introduction} for instance), there exists some
$\beta \in \mathbb{R}^{n} \backslash \{0\}$ such that
\begin{equation*}
\left\langle \beta ,\tilde{x}\right\rangle \geq -\left\langle \beta
,c\right\rangle +\left\langle \beta ,\alpha \bar{R}\right\rangle \text{ for
every }\alpha \in {\mathbb{R}}\text{, }c\in {\mathbb{R}_{+}^{n}}.
\end{equation*}

From this we can deduce that $\beta \in {\mathbb{R}_{+}^{n}} \backslash
\{0\}$ and $\beta (\bar{R})=0$, the latter is an absurd since $\bar{R} > 0$
and  $\beta \neq 0$, consequently, ${-\mathbb{R}_{+}^{n} + \langle
\bar{R} \rangle = \mathbb{R}^{n}}$. Then, $\hat{C}=\mathbb{R}_{-}
\backslash \{0\}\times {\mathbb{R}^{n}\times \{0\}}$ and erlation
\eqref{C:quasiconvex} is given by components as follows:
\begin{itemize}
 \item[$(i)$] $-f(x,y) - (-f(\bar{x}, \bar{y})) \in \mathbb{R}_{-}
 \backslash \{0\} \Rightarrow f^{\prime} (\bar{x}, \bar{y}) (x, y) \in
 \mathbb{R}_{-} \backslash \{0\}$,

 \item[$(ii)$] $g(x,y) - g(0,0) \in {\mathbb{R}^{n}} \Rightarrow
 g^{\prime} (\bar{x}, \bar{y}) (x, y) \in {\mathbb{R}^{n}}$.

 \item[$(iii)$] $h(x,y) = h(\bar{x}, \bar{y}) \Rightarrow h^{\prime}
 (\bar{x}, \bar{y}) (x, y) = 0$.
\end{itemize}
Condition $(ii)$ is clearly trivial, in the following we prove the other two
conditions.

\begin{itemize}
\item[$(i)$] For $-f$: Let $(x,y)\in \widehat{S}$ ($x\neq y$ always). Then
\begin{align*}
 (-f)(x,y)-(-f)(\bar{x},\bar{y})\in \mathbb{R}_{-}\backslash \{0\} &
 \Longleftrightarrow ~f(x,y)>f(\bar{x},\bar{y}) \\
% & \Longleftrightarrow ~(-f)(x,y)<(-f)(\bar{x},\bar{y}) \\
 & \Longleftrightarrow -U(x-y)<-U(\bar{x}-\bar{y}) \\
 & \overset{\eqref{pseudoconvex}}{\Longrightarrow }\,\langle -
 \nabla U(\bar{x}, \bar{y}),x-\bar{x}-y+\bar{y}\rangle <0 \\
 & \Longleftrightarrow ~\langle (\nabla U(\bar{x},\bar{y}), - \nabla
 U(\bar{x}, \bar{y})),(x-\bar{x},y-\bar{y})\rangle <0 \\
 & \Longleftrightarrow \langle -\nabla f(\bar{x},\bar{y}), (x, y) -
 (\bar{x},\bar{y}) \rangle <0 \\
 & \Longleftrightarrow -f^{\prime }(\bar{x},\bar{y}) (x - \bar{x},
 y - \bar{y}) \in - \mathbb{R}_{-}\backslash \{0\},
\end{align*}
where basically we have applied the pseudoconvexity of $-U$.

\item[$(iii)$] For $h$: Let $(x,y)\in \widehat{S}$. Then,
\begin{align*}
 h(x, y) - h(\bar{x}, \bar{y}) = 0 & \Longleftrightarrow \varphi (R +
 \Gamma y - x) = \varphi (R + \Gamma \bar{y} - \bar{x}) \\
 & \overset{\eqref{char:quasilinear}}{\Longrightarrow }\langle \nabla
 \varphi (\bar{R}),\ \Gamma (y-\bar{y}) - \left( x-\bar{x}\right)
 \rangle =0, \\
% & \Longleftrightarrow \langle \nabla \varphi (\bar{R}),\Gamma
% (y-\bar{y}) - \left( x-\bar{x}\right) \rangle =0 \\
 & \Longleftrightarrow \langle (-\nabla \varphi (\bar{R})^{T},\nabla
 \varphi (\bar{R})^{T} \Gamma), \left( x - \bar{x}, y - \bar{y} \right)
 \rangle =0 \\
 & \Longleftrightarrow h^{\prime }(\bar{x},\bar{y})\left( x - \bar{x},
 y - \bar{y} \right) = 0,
\end{align*}%
and this proves $(iii)$ holds.
\end{itemize}

We have proven that $(f, g, h)$ is $\widehat{C}$-quasiconvex at $(\bar{x},
\bar{y})$, and consequently $(\bar{x}, \bar{y})$ is optimal for problem (Q)
by \cite[Corollary 5.15]{jahn2020introduction}. \end{proof}

\begin{remark}
 From the proof, in the previous theorem we can replace the quasilinearity of
 $\varphi $ by the following weaker condition
\begin{equation}
 \langle \nabla \varphi (\bar{R}), ~ \Gamma (y - \bar{y}) - \left( x - \bar{x}
 \right) \rangle = 0, ~ \forall ~ (x, y) \in \widehat{S}, ~~ \varphi (R +
 \gamma y - x) = \varphi (R). \label{cond:h2}
\end{equation}
\end{remark}

\subsection{The Characterization Result}\label{subsec:3-3}

As a direct consequence of Theorems \ref{250224} and \ref{010124}, we
establish sufficient and necessary conditions for problem $(Q)$.

\begin{thr} \label{0703242}
 Let $U$ and $\varphi $ be continuously differentiable functions such that $U$
 is pseudoconcave (thus $-U$ is pseudoconvex) and $\varphi $ is quasilinear;
 and conditions \eqref{200224a} and \eqref{200224b} are verified. Suppose
 that $0 \leq \bar{x}\bot \bar{y}\geq 0$. Then, $(\bar{x}, \bar{y}) \in \hat{S}$
 solves $(Q)$ if and only if it verifies optimality system \eqref{2202243}.
\end{thr}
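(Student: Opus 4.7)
The plan is to obtain the equivalence by assembling the ingredients already developed in Theorems \ref{250224} and \ref{010124}, routing everything through the abstract multiplier system \eqref{2202243} rather than through the concrete componentwise system \eqref{2202245}. Schematically: the forward implication will be read off from the proof of Theorem \ref{250224}, which actually establishes \eqref{2202243} as an intermediate step before specializing to \eqref{2202245}; and the reverse implication will be obtained by directly invoking the sufficient optimality result of Jahn (Corollary 5.15 in \cite{jahn2020introduction}), with the required generalized convexity playing the role it already played in Theorem \ref{010124}.

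For the necessary direction, assume $(\bar x,\bar y)\in\hat S$ solves $(Q)$ with $0\le\bar x\bot\bar y\ge 0$. In the proof of Theorem \ref{250224} the Kurcyusz--Robinson--Zowe constraint qualification \eqref{271023} is verified at $(\bar x,\bar y)$ using only $\bar P\neq 0$ (which follows from \eqref{200224b}), the partition $\{I^a,I^b,I^c,I^d\}$ of indices, and the complementarity condition; pseudoconcavity of $U$ and quasilinearity of $\varphi$ are not used in that part. Once the CQ holds, Theorem 5.3 of \cite{jahn2020introduction} produces multipliers $(\mu,\lambda)\in\mathbb{R}_+^n\times\mathbb{R}$ for which \eqref{2202243} is satisfied. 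This is exactly the conclusion we need, so no further work is required in this direction.

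For the sufficient direction, assume $(\bar x,\bar y)\in\hat S$ together with some $(\mu,\lambda)$ verifies \eqref{2202243}. The plan is to apply Corollary 5.15 of \cite{jahn2020introduction}, which states that if the abstract KKT system \eqref{2202243} holds and $(-f,g,h)$ is $\widehat C$-quasiconvex at $(\bar x,\bar y)$ in the sense of \eqref{C:quasiconvex} with $\widehat C$ given by \eqref{big:C}, then $(\bar x,\bar y)$ solves $(Q)$. The $\widehat C$-quasiconvexity is precisely what the proof of Theorem \ref{010124} verifies: $\bar R>0$ (since $\bar R=0$ would force $\varphi(0)=\varphi(R)$, contradicting \eqref{200224b}), so the Hahn--Banach argument collapses $\widehat C$ to $(\mathbb{R}_-\setminus\{0\})\times\mathbb{R}^n\times\{0\}$; the $-f$ component implication follows from pseudoconvexity of $-U$ via \eqref{pseudoconvex}; the $h$ component implication follows from quasilinearity of $\varphi$ via \eqref{char:quasilinear}; and the $g$ component is trivial. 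Combining the two directions yields the stated equivalence.

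The main obstacle I foresee is a minor bookkeeping point rather than a new difficulty: in Theorem \ref{010124} the hypothesis is the concrete system \eqref{2202245}, which is translated to \eqref{2202243} before invoking Corollary 5.15, whereas here \eqref{2202243} is given directly, so that translation is bypassed. One must therefore be careful to quote only the $\widehat C$-quasiconvexity portion of the proof of Theorem \ref{010124}, and to observe that this portion uses solely the differentiability, pseudoconcavity of $U$, quasilinearity of $\varphi$, and positivity of $\bar R$, all of which are available under the hypotheses of Theorem \ref{0703242}.
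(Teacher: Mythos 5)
Your proposal is correct and follows essentially the paper's own route: the paper proves this theorem simply by combining Theorems \ref{250224} and \ref{010124}, whose proofs contain exactly the two ingredients you invoke (the Kurcyusz--Robinson--Zowe constraint qualification plus Jahn's Theorem 5.3 for the necessary part, and the $\widehat{C}$-quasiconvexity verification plus Jahn's Corollary 5.15 for the sufficient part). Your only deviation is bookkeeping: you pivot directly on the abstract multiplier system \eqref{2202243}, which matches the literal wording of the statement, rather than passing through the componentwise system \eqref{2202245} as the combination of the two cited theorems would.
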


In particular we have the following general result.

\begin{co}
 Let us assume the same assumptions as in Theorem \ref{0703242}, and in
 addition let $U$ be strongly increasing. Then, $(\bar{x}, \bar{y}) \in \hat{S}$
 solves $(Q)$ if and only if it verifies optimality system \eqref{2202243}.
\end{co}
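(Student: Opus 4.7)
The plan is to obtain this corollary as an immediate consequence of Theorem \ref{0703242} combined with Proposition \ref{230224}. The only thing this statement adds to Theorem \ref{0703242} is that the \emph{a priori} complementarity hypothesis $0 \leq \bar{x} \bot \bar{y} \geq 0$ is dropped and replaced by the stronger economic assumption that $U$ is strongly increasing. Consequently, no new optimization machinery is required: the proof is a short bookkeeping argument that chains together results already established in the paper.

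For the necessity direction, I would start by taking $(\bar{x},\bar{y})\in\hat{S}$ to be a solution of $(Q)$. Because $U$ is strongly increasing, Proposition \ref{230224} applies and yields automatically that $0\leq \bar{x}\bot\bar{y}\geq 0$. At this point every hypothesis of Theorem \ref{0703242} is in force, so the ``only if'' part of that theorem delivers the optimality system \eqref{2202243}. For the sufficiency direction, observe that Theorem \ref{010124} (which constitutes the sufficiency half of Theorem \ref{0703242}) does not invoke strong monotonicity of $U$ at all, nor does it require the complementarity condition to be assumed separately; any feasible point satisfying the optimality system is optimal. Hence the ``if'' direction is read off directly from Theorem \ref{0703242}.

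I do not foresee any genuine obstacle here, since the two-step reduction is essentially a tautology. The only point worth double-checking is that Proposition \ref{230224} guarantees existence of a solution under the standing hypotheses (continuity plus compactness of the feasible set inherited from $\hat{S}$), so that the forward implication is not vacuous, and that the sufficiency half of Theorem \ref{0703242} is indeed stated with no dependence on the complementarity assumption. Both verifications are immediate from the statements as written, which makes this corollary essentially a restatement of the characterization theorem with the complementarity condition absorbed into the monotonicity hypothesis on $U$.
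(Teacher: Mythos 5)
Your argument is correct and matches the paper's intended reasoning: the paper states this corollary without proof as an immediate consequence of Theorem \ref{0703242}, with Proposition \ref{230224} supplying the complementarity condition $0\leq\bar{x}\bot\bar{y}\geq 0$ from the strong monotonicity of $U$ in the necessity direction, and the sufficiency direction carrying over unchanged. Your two-step chaining is exactly this.
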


\section{No-trade Characterization}\label{sec:04}

A direct application of optimality system \eqref{2202245} is to study under
which conditions the unique solution is given by $(\bar{x},\bar{y})=(0,0)$,
this is known as the no-trade condition.

\begin{defn}
 It is said that a no-trade condition is verified if $(\bar{x}, \bar{y}) = (0,0)$ is
 the unique solution of problem (Q).
\end{defn}

No-trade condition means that trading does not increase the trader's
utility, that is, the trader does not proposed any trade, see
\cite{angeris2022constant}. In this sense, $P=\nabla \varphi (R)$ can be
interpreted as the (unscaled) prices vector, and if we divide it by $P_{n}>0$
(price of numeraire) we get the reported prices $p_{i}=P_{i}/P_{n}.$ In this
sense, following \cite{angeris2022constant}, we can define a no-trade set
\begin{equation}\label{notrade:cone}
 {K_{\Gamma }}:=\{z\in \mathbb{R}^{n}:~\alpha \Gamma z \leq \nabla
 U(0) \leq \alpha z\ \ \text{for some }\alpha \geq 0\},
\end{equation}
such that prices belonging to that set assures the no-trade property. By its
definition, clearly,  ${K_{\Gamma }=}\{z\in \mathbb{R}^{n}:~
\frac{1}{\alpha} \nabla U(0)\leq z\leq \frac{1}{\alpha }\Gamma ^{-1}
\nabla U(0)\ $for some $\alpha \geq 0\}$ is a compact convex set of
$\mathbb{R}^{n}$. This is a consequence of Theorem \ref{250224}, when
$(\bar{x}, \bar{y}) = (0, 0)$ solves (P), and the complementary condition
is trivially verified, by Theorem \ref{250224}, optimality system
\eqref{2202245} is reduced to \eqref{2202245iv} which can be
equi\-va\-len\-tly expressed by the condition $p\in {K_{\Gamma }}$ .

\begin{pr}
 Let $U$ and $\varphi $ be continuously differentiable functions such that
 conditions \eqref{200224a} and \eqref{200224b} hold. No-trade condition
 implies $p\in {K_{\Gamma }}$.
\end{pr}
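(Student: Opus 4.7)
The plan is to apply Theorem \ref{250224} directly to the no-trade solution and observe that the optimality system collapses to exactly the condition defining $K_\Gamma$. This is a reduction argument rather than a new technical development: most of the work has already been done upstream.

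First I would check that the hypothesis $0 \leq \bar{x} \bot \bar{y} \geq 0$ of Theorem \ref{250224} is satisfied trivially when $(\bar{x}, \bar{y}) = (0,0)$, since the componentwise product $\bar{x}_i \bar{y}_j = 0$ holds vacuously. Since by assumption $(0,0)$ is the unique (hence a) solution of $(Q)$, Theorem \ref{250224} applies and yields a scalar $\alpha \geq 0$ together with the four relations \eqref{2202245i_ii}--\eqref{2202245v}.

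Next I would examine the index partition $I^a, I^b, I^c, I^d$ at $(\bar{x},\bar{y}) = (0,0)$. Clearly $I^b$ (where $0 < \bar{x}_i < R_i$) and $I^c$ (where $\bar{y}_i > 0$) are empty, so conditions \eqref{2202245i_ii} and \eqref{2202245iii} are vacuous. The index set $I^a = \{i : \bar{x}_i = R_i\}$ is also empty since $R > 0$ and $\bar{x} = 0$; hence $I^d = \{1,\dots,n\}$. Therefore only condition \eqref{2202245iv} remains active, which at $(\bar{x}, \bar{y}) = (0,0)$ reads
\begin{equation*}
 \alpha \frac{\partial \varphi}{\partial x_i}(R) \geq \frac{\partial U}{\partial x_i}(0) \geq \alpha \gamma_i \frac{\partial \varphi}{\partial x_i}(R), \quad \forall\, i = 1, \dots, n.
\end{equation*}
Note that $\bar{R} = R + \Gamma \bar{y} - \bar{x} = R$, and \eqref{2202245v} reduces to the trivial identity $\varphi(R) = \varphi(R)$.

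Finally, I would interpret the displayed inequality in vector form. Since $p = \nabla \varphi(R)$ and $\Gamma$ is the diagonal matrix with entries $\gamma_i$, the system is componentwise equivalent to $\alpha \Gamma p \leq \nabla U(0) \leq \alpha p$ for some $\alpha \geq 0$, which is precisely the membership condition $p \in K_\Gamma$ from \eqref{notrade:cone}. The only minor pitfall is being careful with the direction of inequalities and the role of $\gamma_i < 1$, but no substantive obstacle arises because Theorem \ref{250224} has already done the heavy lifting; this proposition really is a specialization to the degenerate solution $(0,0)$.
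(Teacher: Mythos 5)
Your argument is correct and matches the paper's own (very brief) justification: apply Theorem~\ref{250224} at $(\bar x,\bar y)=(0,0)$, where the complementarity condition holds trivially and, since every index lies in $I^{d}$, only condition \eqref{2202245iv} survives, which is exactly the membership $p\in K_{\Gamma}$. The only cosmetic point is that you identify $p$ with the unscaled gradient $\nabla\varphi(R)$ rather than its normalization by $P_{n}$, but since membership in $K_{\Gamma}$ is invariant under positive rescaling of $z$ (absorb the factor into $\alpha$), this changes nothing.
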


In fact, we can characterize the no-trade condition for general classes of
utility and tradding functions by applying Theorem \ref{0703242}.

\begin{thr} \label{260224}
 Let $U$ and $\varphi $ be continuously differentiable functions such that
 $U$ is pseudoconcave (thus $-U$ is pseudoconvex) and $\varphi $ is
 quasilinear, and conditions \eqref{200224a} and \eqref{200224b} hold.
 The following properties are equivalent:
 \begin{itemize}
  \item[$(a)$] No-trade condition is verified.

  \item[$(b)$] $p \in K_{\Gamma }$.
 \end{itemize}
\end{thr}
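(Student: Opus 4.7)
The plan is to invoke the characterization given by Theorem \ref{0703242} at the specific candidate $(\bar{x}, \bar{y}) = (0, 0)$, and to show that at this point the optimality system \eqref{2202245} collapses exactly to the membership $p \in K_{\Gamma}$. The pair $(0, 0)$ trivially satisfies the complementary condition $0 \leq \bar{x} \perp \bar{y} \geq 0$, so Theorem \ref{0703242} is directly applicable. Moreover, the index partition introduced in the proof of Theorem \ref{250224} degenerates to $I^{a} = I^{b} = I^{c} = \emptyset$ and $I^{d} = \{1, \ldots, n\}$; hence \eqref{2202245i_ii} and \eqref{2202245iii} are vacuous, \eqref{2202245v} reduces to the tautology $\varphi(R) = \varphi(R)$, and only the inequality \eqref{2202245iv}, namely $\alpha \nabla \varphi(R) \geq \nabla U(0) \geq \alpha \Gamma \nabla \varphi(R)$ for some $\alpha \geq 0$, remains active.

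Reading the claimed equivalence through this lens: for the implication $(a) \Rightarrow (b)$, already recorded in the preceding proposition, optimality of $(0,0)$ combined with Theorem \ref{0703242} yields \eqref{2202245iv}, which, recalling $p = \nabla \varphi(R)$ and the definition \eqref{notrade:cone}, is precisely $p \in K_{\Gamma}$. Conversely, for $(b) \Rightarrow (a)$, membership $p \in K_{\Gamma}$ supplies a scalar $\alpha \geq 0$ satisfying \eqref{2202245iv}; together with the vacuous remaining conditions and the trivial equality constraint, this shows that $(0,0)$ verifies the full optimality system \eqref{2202245}. The sufficiency half of Theorem \ref{0703242} (that is, Theorem \ref{010124}), whose hypotheses are exactly the pseudoconcavity of $U$ and the quasilinearity of $\varphi$ assumed here, then delivers that $(0,0)$ solves $(Q)$.

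The main subtlety I foresee concerns the uniqueness clause in the definition of the no-trade condition: the previous argument only establishes that $(0,0)$ is \emph{a} solution, not necessarily \emph{the} unique one. To upgrade this to uniqueness, I would combine pseudoconcavity of $U$ (which forces any competing optimum to share the value $U(0)$) with the strict monotonicity \eqref{200224b} of $\varphi$ in at least one component and the complementary condition, arguing that any nonzero feasible pair $(\bar{x}, \bar{y})$ compatible with $\varphi(R + \Gamma \bar{y} - \bar{x}) = \varphi(R)$ must strictly reduce the utility below $U(0)$. This monotonicity-and-complementarity step, rather than the forward-backward passage through the KKT system, is where I expect the delicate technical work to lie.
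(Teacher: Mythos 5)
Your core argument is exactly the paper's: the paper proves Theorem \ref{260224} precisely by applying the characterization of Theorem \ref{0703242} at $(\bar{x},\bar{y})=(0,0)$, where complementarity is automatic and the optimality system \eqref{2202245} collapses to \eqref{2202245iv}, i.e.\ to $p\in K_{\Gamma}$. So the first two paragraphs of your proposal reproduce the intended proof.

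On your third paragraph: the uniqueness worry you raise is real, but the paper does not carry out any such step --- it implicitly identifies ``no-trade condition'' with ``$(0,0)$ solves $(Q)$'' and leaves the uniqueness clause of the definition unaddressed. Be aware that the fix you sketch would not go through under the stated hypotheses alone: with only $\nabla U\geq 0$ (rather than $U$ strongly increasing) and $\nabla\varphi\gneq 0$ (positivity of at least one, not every, partial derivative), one can have a nonzero feasible trade along a coordinate on which both $U$ and $\varphi$ are flat, giving a second optimizer with the same utility value. So a genuinely rigorous uniqueness argument needs extra assumptions (e.g.\ $U$ strongly increasing, as in the paper's corollaries, or strict quasiconcavity); identifying that, rather than the KKT bookkeeping, is indeed where the missing work lies --- but it is missing from the paper as well, not just from your sketch.
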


\begin{remark}
 In \cite{angeris2022constant}, the tradding function $\varphi $ is assumed
 convex, differentiable and increasing, thus given any $\lambda \in
 \mathbb{R}$, the sets $S_{\lambda} (\varphi)$ and $U_{\lambda}
 (\varphi)$ are convex (because $\varphi $ is increasing and convex), then
 $\varphi $ is quasilinear by \cite[Theorem 3.3.1]{CMA} (see the comment
 right after relation \eqref{char:quasilinear}). Hence, \cite[Section
 5.1]{angeris2022constant} is a particular case of our approach for the case
 of a single discount rate $\gamma_{i}=\gamma$.
\end{remark}

\section{Examples on Mean Functions}\label{sec:05}

\subsection{Weighted Quasi-arithmetic Means}\label{sec:wqam}

The trading function $\varphi(x)$ is usually considered a type of mean, such as a geometric or arithmetic mean. These two means have a well-defined
convexity, as the former is concave and the latter is both convex and concave. Convexity is a hypothesis employed in \cite{angeris2022constant} to establish
their result about the no-trade condition. However, our main result relies on quasi-linearity rather than convexity, which is a less strict assumption.
In particular, our result captures quasi-arithmetic means, which are quasi-linear but not necessarily either convex or concave.
Let us note that quasi-arithmetic means have been studied since almost one century ago~\cite{finetti,knopp,kolmogorov,nagumo}.

Let $f:\mathbb{R} \longrightarrow \mathbb{R}$ be a continuous and strictly monotonic function; hereafter it will be called the mean generator.
Then we define the weighted quasi-arithmetic mean
\begin{equation}\label{wqam}
\varphi(x)=f^{(-1)}\left[ \sum_{i=1}^{n} \omega_i f(x_i) \right],
\end{equation}
where the weights $\omega_i >0$, $i=1,\cdots,n$, and $\sum_{i=1}^{n} \omega_i=1$. Under these assumptions, the weighted quasi-arithmetic mean
fulfils
$$
\min\{x_1, \cdots, x_n\} \le \varphi(x) \le \max\{x_1, \cdots, x_n\},
$$
where the equalities only hold whenever $x_1=\cdots=x_n$, see Theorem 82 in~\cite{hlp}.
If we moreover limit ourselves to increasing generators, then the weighted quasi-arithmetic mean enjoys all the classical properties
attributable to means~\cite{aczel}. The weighted arithmetic and geometric means correspond, respectively,
to the cases $f(y)=y$ and $f(y)=\ln(y)$. For this type of mean, we have the following result:

\begin{corollary}\label{corql}
Let
\begin{eqnarray}\nonumber
\varphi: \mathbb{R}^n_+ &\longrightarrow& \mathbb{R} \\ \nonumber
x=(x_1,\cdots,x_n) &\longmapsto& f^{(-1)}\left[ \sum_{i=1}^{n} \omega_i f(x_i) \right],
\end{eqnarray}
be a quasi-arithmetic mean generated by a function $f$ which is both continuously differentiable and strictly monotonic.
If we furthermore assume that $f' > 0$, then $\varphi$ is both continuously differentiable and quasi-linear.
\end{corollary}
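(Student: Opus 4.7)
I would handle the two assertions of the corollary in turn.

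For the continuous differentiability, the plan is routine. Since $f' > 0$ on its domain, the inverse function theorem yields $f^{(-1)} \in C^1$ with derivative $1/(f' \circ f^{(-1)})$. The separable sum $\psi(x) := \sum_{i=1}^n \omega_i f(x_i)$ is $C^1$ as a finite sum of one-variable $C^1$ compositions, and $\varphi = f^{(-1)} \circ \psi$ inherits $C^1$ regularity by the chain rule, with
\begin{equation*}
\nabla \varphi(x) = \frac{1}{f'(\varphi(x))} \bigl(\omega_1 f'(x_1),\ldots,\omega_n f'(x_n)\bigr).
\end{equation*}

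For the quasi-linearity, my plan is to exploit the sublevel/upper-level set characterization recalled in Section \ref{sec:02}. Since $f^{(-1)}$ is strictly increasing (because $f' > 0$), for every $c$ in the range of $\varphi$ one has the equivalences $\varphi(x) \le c \Leftrightarrow \psi(x) \le f(c)$ and $\varphi(x) \ge c \Leftrightarrow \psi(x) \ge f(c)$. Thus the sublevel and upper-level sets of $\varphi$ coincide with those of the auxiliary separable function $\psi$ at a shifted height, and the quasi-linearity of $\varphi$ reduces to the quasi-linearity of $\psi$. Equivalently, I would verify the gradient criterion \eqref{char:quasilinear} on $\psi$: whenever $\psi(x) = \psi(y)$, one needs $\sum_i \omega_i f'(y_i)(x_i - y_i) = 0$.

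The principal obstacle I anticipate is precisely this last implication: passing from the value identity $\sum_i \omega_i \bigl(f(x_i) - f(y_i)\bigr) = 0$ to its linearized counterpart $\sum_i \omega_i f'(y_i)(x_i - y_i) = 0$. Rather than attacking it analytically via Taylor expansion, I would attempt a geometric route: under the componentwise diffeomorphism $\Phi \colon x \mapsto (f(x_1),\ldots,f(x_n))$, each level set $\{\psi = f(c)\}$ is the preimage of the affine hyperplane $\{\sum_i \omega_i y_i = f(c)\}$, which is the structural property most closely aligned with the definition of quasi-linearity. Should this reduction require an additional hypothesis on $f$ (for instance, convexity or concavity, in order to secure convexity of one side of the level set of $\psi$ rather than only of the level set itself), this is the step at which the need would be revealed, and one would accordingly strengthen the corollary's hypotheses or restrict the class of admissible generators $f$.
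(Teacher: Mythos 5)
Your treatment of continuous differentiability is correct and is exactly the paper's argument (inverse function theorem for $f^{(-1)}$, then the chain rule), so nothing to add there. For quasi-linearity, your reduction to the separable function $\psi(x)=\sum_{i}\omega_i f(x_i)$ is valid (since $f^{(-1)}$ is continuous and strictly increasing, the sub- and upper-level sets of $\varphi$ at height $c$ are those of $\psi$ at height $f(c)$), but the obstacle you flag at the end is not a removable technicality: the linearized identity $\psi(x)=\psi(y)\Rightarrow\sum_i\omega_i f'(y_i)(x_i-y_i)=0$ is false in general, and so is the conclusion of Corollary~\ref{corql} as literally stated for $n\ge 2$. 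Take $n=2$, $\omega_1=\omega_2=\tfrac12$ and $f=\ln$, so that $\varphi(x)=\sqrt{x_1x_2}$ is the geometric mean and $f$ satisfies every hypothesis. Then $\varphi(1,4)=\varphi(2,2)=2$ while $\langle\nabla\varphi(2,2),(1,4)-(2,2)\rangle=\tfrac12(-1)+\tfrac12(2)=\tfrac12\neq 0$, contradicting \eqref{char:quasilinear}; equivalently, the sublevel set $\{x\in\mathbb{R}^2_+:\ x_1x_2\le 1\}$ contains $(10,\tfrac1{10})$ and $(\tfrac1{10},10)$ but not their midpoint, so $\varphi$ is quasiconcave but not quasiconvex, hence not quasilinear in the sense of section~\ref{sec:02}. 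Your proposed geometric route cannot rescue this: the level sets of $\psi$ are indeed preimages of affine hyperplanes under the componentwise diffeomorphism $\Phi$, but convexity of sub- and upper-level sets is not preserved under nonlinear preimages, and that convexity is exactly what quasilinearity demands.

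For comparison, the paper's own proof of this step is the one-line inference ``$f'>0$, hence $\varphi$ is increasing, and consequently quasi-linear,'' which is valid only for $n=1$: monotonicity with respect to the componentwise order says nothing about convexity of level sets in several variables. So the gap you identified is present in the paper's proof as well, not only in your attempt. Worse, by the classical results on separable quasiconvex functions (Debreu--Koopmans; Crouzeix--Lindberg), for $n\ge 2$ quasiconvexity of $\psi$ essentially forces $f$ to be convex and quasiconcavity forces it to be concave, so quasilinearity of a weighted quasi-arithmetic mean in the stated sense essentially restricts $f$ to be affine, i.e., $\varphi$ to be the weighted arithmetic mean. The honest outcome of your analysis is therefore that the statement needs repair --- either a strengthened hypothesis on $f$, or a weakening of ``quasilinear'' to the directional condition \eqref{cond:h2} actually used in the sufficiency proof --- and you were right to stop where you did rather than assert the linearized identity.
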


\begin{proof}
Under these hypotheses, $\varphi$ is a well-defined quasi-arithmetic mean~\cite{hlp}, and since $f' > 0$ then $\varphi$ is increasing~\cite{aczel},
and consequently quasi-linear. Moreover, since $f$ is continuously differentiable, $f^{(-1)}$ is so too by the inverse function theorem
(because $f' > 0$), and the result follows by the chain rule of differential calculus.
\end{proof}

\begin{remark}
Obviously, the continuously differentiability of $f$ along with the condition $f' > 0$ imply the strict monotonicity of the mean generator.
\end{remark}

This corollary shows that quasi-arithmetic means $\varphi$ generated by suitable functions $f$ are admissible trading functions within the
theoretical framework we have constructed.

If we further assume that $f \in \mathcal{C}^4$ and $f>0$, $f'>0$, $f''>0$ simultaneously,
then $\varphi$ is convex if and only $f'/f''$ is concave; alternatively, if and only if $f' f'''/(f'')^2$
is increasing (Theorem 106, \cite{hlp}). It follows from the proof of this theorem that, correspondingly,
$\varphi$ is concave if and only $f'/f''$ is convex, or equivalently if and only if $f' f'''/(f'')^2$ is decreasing.
It also follows from this proof that ``monotonicity'' is a sufficient condition for this theorem to hold, in the sense that if we replace
the condition $f'>0$ for $f'<0$, the rest of the statement remains intact. Note that, complementarily, we can select a $f \in \mathcal{C}^4$ that fulfils
$f>0$, either $f'>0$ or $f'<0$, and $f''>0$, and such that it is neither convex nor concave; therefore it is not covered by theory
developed in \cite{angeris2022constant}. However, since such a $f$ gives rise to a well-defined and increasing mean $\varphi$ whenever $f'>0$,
this means that, under this assumption, it is quasi-linear and consequently fulfils the hypotheses of our main result (by Corollary~\ref{corql}).
In particular, the no-trade condition for such a $\varphi$ is given by the no-trade set $K_\Gamma$ (see the previous section).

Let us now consider a family of particular cases of $f$, and correspondingly of $\varphi$. Departing from Chapter III in~\cite{hlp},
and following the previous paragraphs in this section, we can check that
\begin{eqnarray}\nonumber
f: \mathbb{R}_+ &\longrightarrow& \mathbb{R} \\ \nonumber
y &\longmapsto& (y+1)^p \ln(y+1),
\end{eqnarray}
for any fixed $p>1$, defines a mean $\varphi$ that is continuously differentiable and increasing (and therefore quasi-linear),
but neither convex nor concave. Moreover, it admits the explicit representation
\begin{equation}\label{tflo}
\varphi(x)= \exp \left( W_0 \left\{ p \left[ \sum_{i=1}^{n} \omega_i (x_i+1)^p \ln(x_i+1) \right] \right\}\bigg/p \right) -1,
\end{equation}
where $W_0(\cdot)$ is the principal branch of the Lambert omega function, a special function that has been
studied from classical to modern times~\cite{bruijn,corless,euler,fsc,ps,wright}. This trading function
does not fulfil the hypotheses employed in \cite{angeris2022constant}, but however does fulfil the hypotheses employed by us in the present work.
Consequently, the no-trade condition translates for it into the no-trade region $K_\Gamma$. Note that one can use many variants of this function without
altering this result, such as the alternative
\begin{eqnarray}\nonumber
&& \varphi(x) =  -e^{-1/p} + \\ \nonumber
&& \exp \left( W_0 \left\{ p \left[ \sum_{i=1}^{n} \omega_i \{(x_i+e^{-1/p})^p \ln(x_i+e^{-1/p}) + e p\} \right]
-e^{-1} \right\}\bigg/p \right),
\end{eqnarray}
among uncountably many (for each fixed $p$) closely related possibilities, as it is immediate to check.

\subsection{Numerical Experiments}

In this subsection we illustrate numerically a CFMM based on a
quasi-arithmetic mean.  It is based on \eqref{tflo}, for the special case
of equal weights $\omega _{i}=1/n$, $i=1,\cdots ,n$, and $p=2$, which we
denote by
\begin{equation*}
\varphi _{\mathrm{qm}}(x)=\exp \left( W_{0}\left\{ \frac{p}{n}\left[
\sum_{i=1}^{n}(x_{i}+1)^{2}\ln (x_{i}+1)\right] \right\} \bigg/p\ \right) -1.
\end{equation*}%
We use a similar example as that in \cite{angeris2022constant}, where $n=6$
assets are considered with reserves
\begin{equation*}
R=(1,3,2,5,7,6),
\end{equation*}%
and we also assume a single discount rate $\gamma _{i}=\gamma =0.9$ for
simplicity. The corresponding CFMMs prices are given by
\begin{equation*}
p_{i}=\frac{\nabla \varphi _{\mathrm{qm}}(R)_{i}}{\nabla \varphi _{\mathrm{qm%
}}(R)_{6}}\text{ for every }i\in \{1,...,6\}.
\end{equation*}%
We have computed the prices numerically by applying a simple forward finite
difference formula to find
\begin{equation*}
p=(0.13937573, 0.44068816, 0.28010876, 0.80312261, 1.20524975, 1).
\end{equation*}%
We consider a linear utility $U(z)=\pi ^{T}z$ in $(Q)$, where $\pi \in
\mathbb{R}_{+}^{6}$ models the trader private prices. In the first
experiment the corresponding prices are parameterized by
\begin{equation*}
\pi \equiv (tp_{1},p_{2},...,p_{6}),
\end{equation*}%
for some $t\in \lbrack \frac{1}{2},2]$, where we omit superscripts for
simplicity. We compute the optimal trades by solving the corresponding
problem $(Q)$ for each parameter, and we identify when the no-trade
condition, $\bar{x}-\bar{y}=0$, is verified. In our case, we have solved
numerically the problem by using the SciPy optimization library in Python \cite{V}. Since
the value $t=1$ corresponds to the case for which the trader and market prices coincide, it
is expected that the no-trade region is located around this value. In Figure \ref%
{fig:wquasi_a_1_param_notrade} we represent graphically the optimal trade
and no-trade regions. We also reproduce the experiment for two parameters:
\begin{equation*}
\pi \equiv (tp_{1},sp_{2},...,p_{6}),
\end{equation*}%
for some $t,s\in \lbrack \frac{1}{2},2]$. We compare the results with those
obtained for the same experiments, but employing more standard market
functions, namely the arithmetic mean
\begin{equation*}
\varphi _{\mathrm{am}}(x)=\frac{1}{n}\sum_{i=1}^{n}x_{i},
\end{equation*}%
and the geometric mean
\begin{equation*}
\varphi _{\mathrm{gm}}(x)=\prod\limits_{i=1}^{n}x_{i}^{1/n},
\end{equation*}%
which was considered in \cite{angeris2022constant}. We can check, see
Figures \ref%
{fig:arithmean_1_param_notrade},
\ref{fig:geomean_1_param_notrade}, \ref{fig:wquasi_a_1_param_notrade}, \ref{fig:wquasi_a_2_param_notrade}, \ref{fig:geomean_2_param_notrade}, and \ref%
{fig:arithmean_2_param_notrade}, that the corresponding results for both maps
are very similar to the ones obtained for $\varphi _{\mathrm{qm}}$,
as was expected from our theoretical developments in the previous section.

\begin{figure}[h]
\begin{subfigure}[b]{0.5\linewidth}\centering
\includegraphics[width=\linewidth]{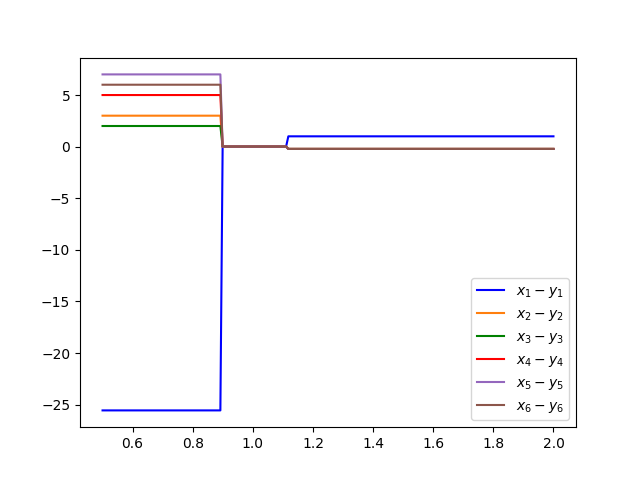}
 \caption{Optimal trades $x-y$ for $t\in[0.5,2]$. }
 \end{subfigure}
 \begin{subfigure}[b]{0.5\linewidth}\centering
\includegraphics[width=\linewidth]{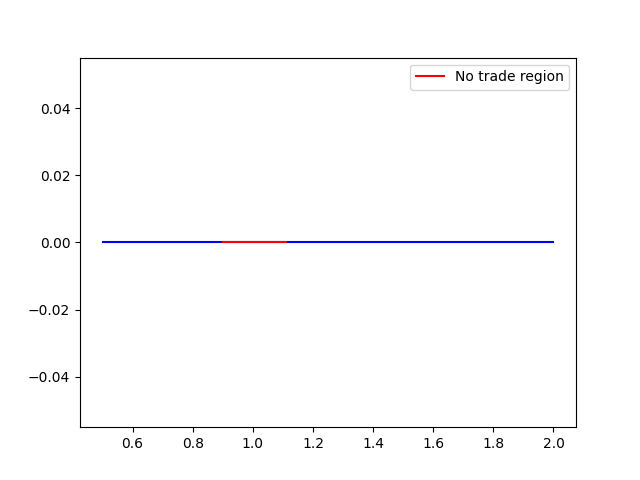}
\caption{No-trade region. }
\end{subfigure}
\caption{Market function $\varphi_{\mathrm{am}}$. One parameter case.}
\label{fig:arithmean_1_param_notrade}
\end{figure}

\begin{figure}[h]
\begin{subfigure}[b]{0.5\linewidth}\centering
\includegraphics[width=\linewidth]{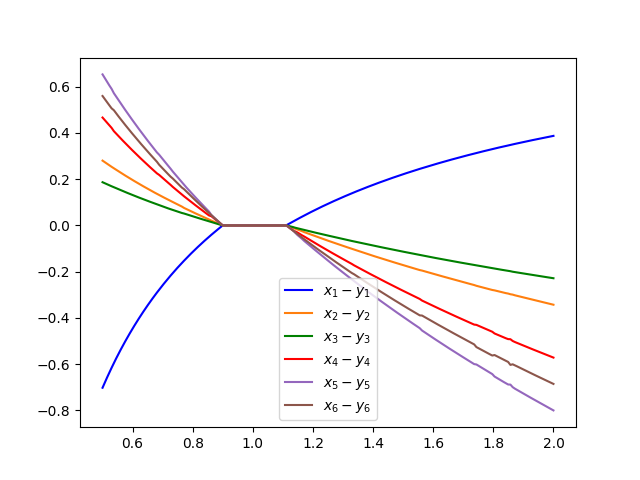}
 \caption{Optimal trades $x-y$ for $t\in[0.5,2]$. }
 \end{subfigure}
 \begin{subfigure}[b]{0.5\linewidth}\centering
\includegraphics[width=\linewidth]{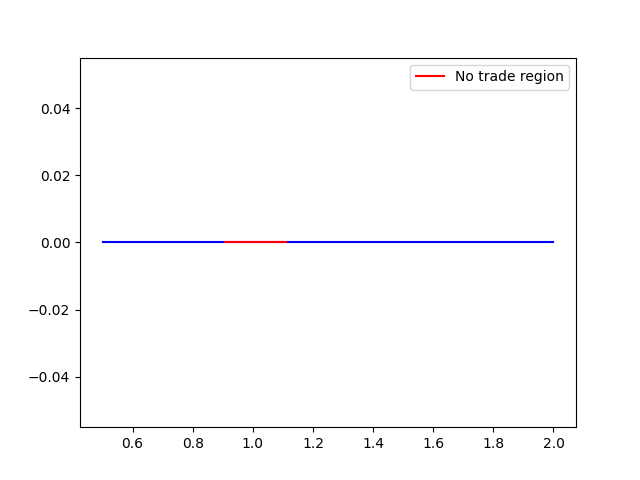}
\caption{No-trade region. }
\end{subfigure}
\caption{Market function $\varphi_{\mathrm{gm}}$. One parameter case.}
\label{fig:geomean_1_param_notrade}
\end{figure}

\begin{figure}[h]
\begin{subfigure}[b]{0.5\linewidth}\centering
\includegraphics[width=\linewidth]{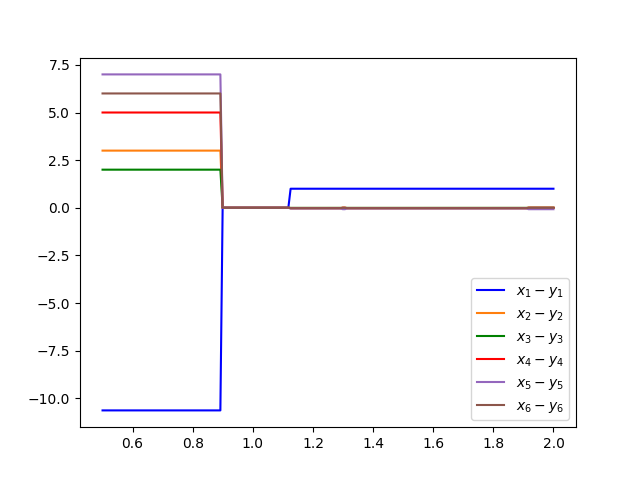}
 \caption{Optimal trades $x-y$ for $t\in[0.5,2]$. }
 \end{subfigure}
 \begin{subfigure}[b]{0.5\linewidth}\centering
\includegraphics[width=\linewidth]{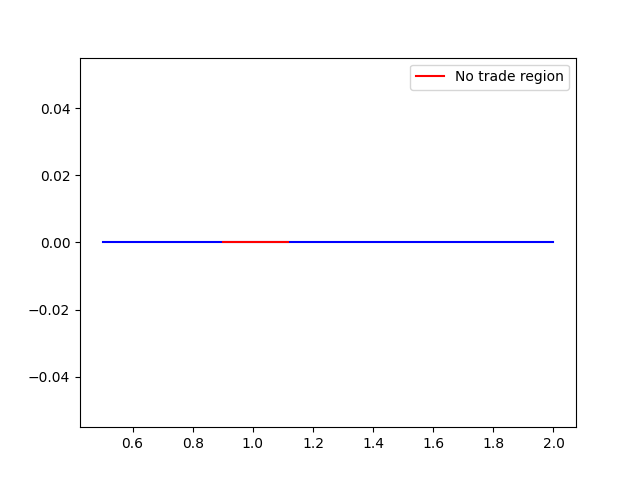}
\caption{No-trade region. }
\end{subfigure}
\caption{Market function $\varphi_{\mathrm{qm}}$. One parameter case.}
\label{fig:wquasi_a_1_param_notrade}
\end{figure}

\begin{figure}[h]
\includegraphics[width=\linewidth]{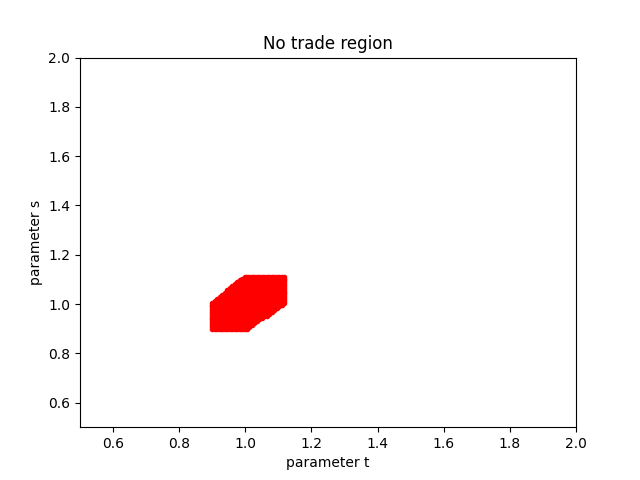}
\caption{Market function $\varphi_{\mathrm{qm}}$. Two parameter case.}
\label{fig:wquasi_a_2_param_notrade}
\end{figure}

\begin{figure}[h]
\includegraphics[width=\linewidth]{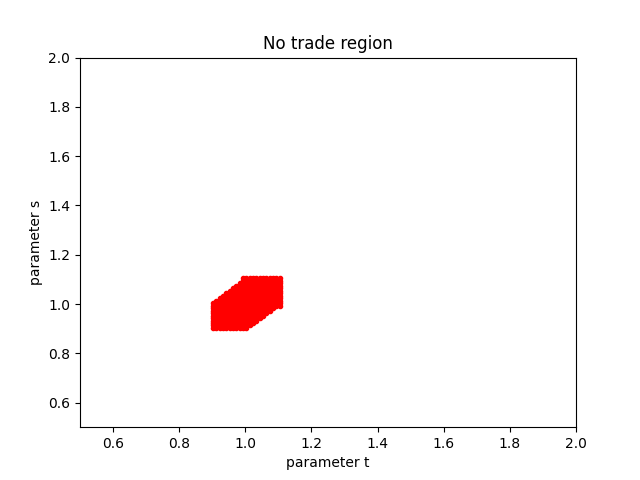}
\caption{Market function $\varphi_{\mathrm{gm}}$. Two parameter case.}
\label{fig:geomean_2_param_notrade}
\end{figure}

\begin{figure}[h]
\includegraphics[width=\linewidth]{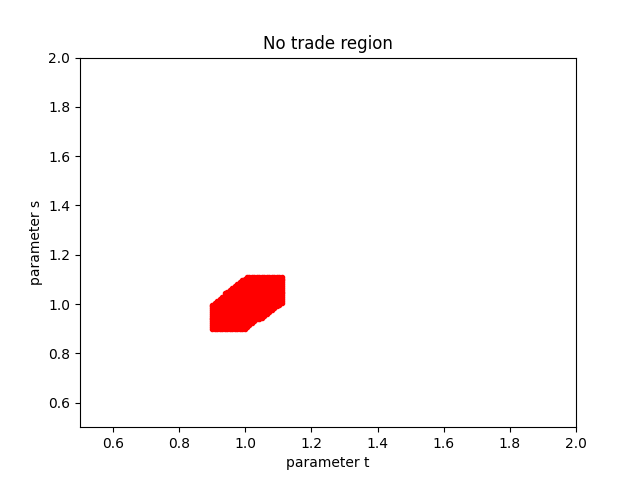}
\caption{Market function $\varphi_{\mathrm{am}}$. Two parameter case.}
\label{fig:arithmean_2_param_notrade}
\end{figure}

\subsection{Extensions}

Many other examples are still possible. For instance, consider the (unweighted) quasi-arithmetic mean
\begin{eqnarray}\nonumber
\varphi: \mathbb{R}^n_+ &\longrightarrow& \mathbb{R} \\ \nonumber
x=(x_1,\cdots,x_n) &\longmapsto& f^{(-1)}\left[ \frac{1}{n} \sum_{i=1}^{n} f(x_i) \right],
\end{eqnarray}
which is of course a particular case of~\eqref{wqam} (obtained by setting $\omega_i=1/n$ for $i=1,\cdots,n$).
The function $f$ is still assumed to be continuous and strictly monotonic, so the mean is well-defined. If we further assume that
$f \in C^2$ and $f' \neq 0$, then the convexity of the mean is characterized in Theorem 3.1 of~\cite{papa21} and concavity is characterized
in Theorem 2.2 of~\cite{papa18}. From now on we also assume that $f'>0$, so the mean is strictly increasing and covered by the statement of
Corollary~\ref{corql}. Since these characterizations require less regularity than those exposed in subsection~\ref{sec:wqam},
but the generators still fulfil the hypotheses of Corollary~\ref{corql} (what means the means are quasi-linear),
they open the possibility of constructing new trading functions included in our theory but not considered in \cite{angeris2022constant}.
Actually, even the weighted case~\eqref{wqam}, which was considered for $\mathcal{C}^4$ functions in~\cite{hlp}
(see subsection~\ref{sec:wqam}), can be extended to $C^2$ functions (Theorem 5, \cite{cdjj}).
Moreover, Theorem 1 of~\cite{cdjj} asserts the essential equivalence between the weighted and the unweighted cases in relation to convexity/concavity.
Finally, let us mention that this reference, \cite{cdjj}, presents some explicit examples of generators that give rise to means that are neither
convex nor concave and enjoy different degrees of smoothness. Consequently, they can be used as starting point to construct trading functions
regarded by our theory but not that of \cite{angeris2022constant}. One such mean, based on the smooth generator of Example 4 in~\cite{cdjj}, is given by
the explicit formula:
\begin{equation}\label{tflo2}
\varphi(x)= \sum_{i=1}^{n} \left[ \omega_i (x_i+e^{x_i}) \right] -W_0 \left\{ \exp \left[ \sum_{i=1}^{n} \omega_i (x_i+e^{x_i}) \right] \right\}.
\end{equation}

Extending the collection of examples is not just a matter of regularity. For
instance, in \cite{zhao}, the family of generalized quasi-arithmetic means
\begin{eqnarray}\nonumber
\Phi: \mathbb{R}_+^n &\longrightarrow& \mathbb{R} \\ \nonumber
x=(x_1,\cdots, x_n) &\longmapsto& F^{(-1)}\left[ \sum_{i=1}^{n} \omega_i f_i(x_i) \right],
\end{eqnarray}
is considered. In this case, the functions $f_i: \mathbb{R}_+ \longrightarrow \mathbb{R}$ are $C^2$ and convex
for all $i=1,\cdots, n$ (but no monotonicity assumption is in principle imposed). On the other hand, $F: \mathbb{R}_+ \longrightarrow \mathbb{R}$
is required to be convex, strictly increasing, and $C^2$. This constitutes an obvious extension of the quasi-arithmetic means discussed
in subsection~\ref{sec:wqam}. The convexity of these generalized quasi-arithmetic means is characterized in Theorem 2.2 in~\cite{zhao}. Moreover,
one can characterize their concavity just by reverting the inequality in the statement of this theorem; the proof would follow identically
{\it mutatis mutandis}. As these means constitute a structural generalization of the quasi-arithmetic means, they could also be used to build
new trading functions falling under our more general umbrella, but not necessarily that
of \cite{angeris2022constant}. In such a case, one would need to consider
the functions $f_i$ to be strictly increasing as well, in order to assure the quasi-linearity of $\Phi$ (since it would become the composition
of two strictly increasing functions).

As a final note, we state the obvious fact that other generalizations of quasi-arithmetic means are indeed possible~\cite{mb}.

\section{Conclusions}
\label{sec:06}

This paper has been devoted to the mathematical analysis of a sort of AMMs, the so called constant function market makers, which exchange algorithm
is determined by a trade function. This function depends on the provision of the different liquidity pools and, keeping its constancy in every trade,
sets the asset prices. The robustness of this type of AMMs against the attacks of arbitrageurs has been established in the literature under the assumption
of a convex trade function~\cite{angeris2022constant}.
These studies formulate the problem as a mathematical optimization and give conditions on the optimality of no trading
whatsoever, what implies the absence of arbitrage, see~\cite{angeris2022constant} and references therein.

Herein, we have extended these previous works in several directions, but
always restricted to those cases in which the trading functions are
monotonically increasing, a usual assumption in the literature (which is also a characteristic property of means).
First, we have considered the case of quasilinear, but not necessarily convex, trade functions. Then, we have not only characterized the no-trade region, but actually any type of optimal trade, and the former came as a consequence of
the latter. We have also considered asset-dependent fees, rather than the uniform fee assumed in~\cite{angeris2022constant}. And finally, this all
has served us to embed the construction of new possible trade functions into the theory of generalized means, which is well-studied from the
mathematical viewpoint.

Our mathematical results aim to generalize the class of trade
functions that can be used to construct the exchange algorithm over which an AMM is designed. In this spirit, we have given families of examples of constant
function markets makers built based on trade functions that are quasilinear but not convex. Since popular AMMs such as Uniswap \cite{AZS}
and Balancer \cite{MM} use a
trade function that is a mean (be it weighted or unweighted), we have constructed our quasilinear-but-not-convex trade functions as generalized means.
The mathematical theory of generalized means is both rich and classical, what has facilitated their analysis as trade functions. Furthermore, we have
performed the numerical optimization of AMM trading for three particular cases of trade functions: the arithmetic and geometric means (both with a well-defined convexity), and another exotic mean that is quasilinear but neither concave nor convex; therefore, the former are covered by the classical theory in~\cite{angeris2022constant}, but not the latter, which is only regarded in our extension. Our numerical experiments show that all the three means behave similarly as trade functions. Therefore, quasilinear trade functions could in principle be as robust to arbitrage as convex/concave functions.

Overall, our results open the possibility of constructing new AMMs based on constant functions that are not necessarily either convex or concave, but still keep the robustness of the AMM against arbitrage attacks. Our preliminary numerical experiments suggest that some exotic means that are quasilinear but not convex (and not concave either) might serve to this purpose. Of course, more research is needed in order to check other properties of the so-generated AMMs. Clearly, we have to pay a price for the mathematical sophistication that the use of generalized means implies; but, at least in our opinion, this very same fact may carry about advantages too. In general terms, we believe that this line of research, which relies on the mathematical formalization of the AMM functioning and its systematic analysis, can serve to improve the design of constant function market makers; remarkably, it opens the possibility of doing so without the financial risk that related empirical studies might imply.

\section*{Acknowledgements}

This work has received funding from the Government of Spain (Ministerio de Ciencia e Innovaci\'on)
and the European Union through Projects PID2021-125871NB-I00, PID2020-112491GB-I00/AEI/10.13039/501100011033, CPP2021-008644 /AEI/ 10.13039 /501100011033/ Uni\'on Europea Next Generation EU /PRTR, and TED2021-131844B-I00 /AEI/ 10.13039 /501100011033/ Uni\'on Europea Next Generation EU /PRTR, and by ANID--Chile under project Fondecyt Regular 1241040.


\begin{thebibliography}{99}

\bibitem{aczel}
 {\sc J. Acz\'el}, {\it On mean values}, Bull. Amer. Math. Soc., {\bf 54},
 392--400, (1948).

\bibitem{AZS}
 {\sc H. Adams, N. Zinsmeister, M. Salem, R. Keefer, D. Robinson},
 {\it Uniswap v3 core.} Technical Report, (2021).

\bibitem{angeris2022constant}
 {\sc G. Angeris, A. Agrawal, A. Evans, T. Chitra, and S. Boyd},  {\it Constant
 function market makers: multi-asset trades via convex optimization};
 in: D. A. Tran et al. (eds): ``Handbook on Blockchain''. Springer
 Optimization and Its Applications, Vol. {\bf 194}. Springer. (2022).

 \bibitem{angeris2020}
 {\sc G. Angeris and T. Chitra}, {\it Improved price oracles: Constant function
 market makers}; in Proceedings of the 2nd ACM Conference on Advances in
 Financial Technologies, ACM, New York. (2020).

 \bibitem{angeris2022}
 {\sc G. Angeris, T. Chitra, and A. Evans}, {\it When Does The Tail Wag The Dog? Curvature and Market Making}, Cryptoeconomic Systems, 2(1). https://doi.org/10.21428/58320208.e9e6b7ce. (2022).

 \bibitem{angeris2023}
 {\sc G. Angeris, A. Evans, and T. Chitra}, {\it Replicating market makers}, Digital Finance {\bf 5}, 367--387, (2023).

 \bibitem{angeris2021}
 {\sc G. Angeris, H.-T. Kao, R. Chiang, C. Noyes, and T. Chitra},  {\it An Analysis of Uniswap markets},
  Cryptoeconomic Systems, 0(1). https://doi.org/10.21428/58320208.c9738e64. (2021).

\bibitem{AE}
 \textsc{K.J. Arrow, A.C. Enthoven}, {\it Quasiconcave programming},
 Econo\-me\-tri\-ca, \textbf{29}, 779--800, (1961).

\bibitem{ADSZ}
 {\sc M. Avriel, W.E. Diewert, S. Schaible, I. Zang}, {\it Generalized
 Concavity}. SIAM, Philadelphia, (2010).

 \bibitem{mb}
 {\sc M. Bajraktarevi\'c}, {\it Sur une \'equation fonctionnelle aux valeurs
 moyennes}, Glasnik Mat.-Fiz. Astronom. Dru\v{s}tvo Mat. Fiz. Hrvatske
 Ser. II, {\bf 13}, 243--248, (1958).

\bibitem{bruijn}
 {\sc N.G. de Bruijn}, {\it Asymptotic Methods in Analysis}, North-Holland,
 Amsterdam, (1958).

\bibitem{CMA}
 \textsc{A. Cambini, L. Martein}. {\it Generalized Convexity and
 Optimization}. Springer-Verlag, Berlin-Heidelberg, (2009).

\bibitem{cdjj}
 {\sc J. Chudziak, D. G\l{}azowska, J. Jarczyk, W. Jarczyk}, {\it On weighted
 quasi-arithmetic means which are convex}, Math. Inequal. Appl., {\bf 22},
 1123--1136, (2019).

\bibitem{corless}
 {\sc R. M. Corless, G. H. Gonnet, D. E. G. Hare, D. J. Jeffrey, D. E. Knuth},
 {\it On the Lambert $W$ function}, Adv. Comp. Math., {\bf 5}, 329--359,
 (1996).

\bibitem{D-1959}
 \textsc{G. Debreu}, {\it Theory of value}. John Wiley, New York,
 (1959).

%\bibitem{DK}
%\textsc{G. Debreu, C. Koopmans}, {\it Additively descomposed
% quasiconvex functions},  Math. Programm., \textbf{24}, 1--38,
% (1982).

\bibitem{egorov19}
    {\sc M. Egorov}, {\it StableSwap -- efficient mechanism for Stablecoin liquidity}, (2019).

\bibitem{ehrgott2005multicriteria}
 {\sc M. Ehrgott}. {\it Multicriteria optimization}. Vol. 491, Springer
 Science $\&$ Business Media, (2005).

\bibitem{euler}
 {\sc L. Euler}, {\it De serie Lambertina plurimisque eius insignibus
 proprietatibus}, Acta Acad. Scient. Petropol. {\bf 2}, 29--51, (1783).

\bibitem{finetti}
 {\sc B. de Finetti}, {\it Sul concetto di media}, Giornale dell' Instituto
 Italiano degli Attuarii {\bf 2}, 369--396, (1931).

\bibitem{fsc}
 {\sc F.N. Fritsch, R.E. Shafer, W.P. Crowley}, {\it Algorithm 443: Solution
 of the transcendental equation $we^w = x$}, Communications of the
 ACM {\bf 16}, 123--124, (1973).

\bibitem{H}
 {\sc R. Hanson}, {\it Logarithmic markets coring rules for modular
 combinatorial information aggregation}, The Journal of Prediction Markets.
 {\bf 1 (1)}, 3--15, (2012).

\bibitem{hlp}
 {\sc G. Hardy, J.L. Littlewood, G. P\'olya}, {\it Inequalities}, Cambridge
 University Press, London, (1934).

\bibitem{jahn2020introduction}
 {\sc J. Jahn}. {\it Introduction to the theory of nonlinear optimization}.
 Springer Nature, (2020).

\bibitem{knopp}
 {\sc K. Knopp}, {\it \"Uber Reihen mit positiven Gliedern}, J. London
 Math. Soc. {\bf 3}, 205--211, (1928).

\bibitem{kolmogorov}
 {\sc A.N. Kolmogorov}, {\it Sur la notion de la moyenne}, Rend. Accad.
 dei Lincei {\bf 12}, 388--391, (1930).

\bibitem{Manga}
 {\sc O.L. Mangasarian}, {\it Pseudo-convex functions},  J. SIAM
 Control, Ser. A, {\bf 3}, 281--290, (1965).

\bibitem{MM}
 {\sc F. Martinelli, N. Mushegian}, {\it A non-custodial portfolio mannager,
 li\-qui\-di\-ty provider, and price sensor}, (2019).

\bibitem{MWG}
 {\sc A. Mas-Colell, M.D. Whinston, J.R. Green}, {\it Microeconomic
 Theory}. Oxford University Press, Oxford, (1995).

\bibitem{nagumo}
 {\sc M. Nagumo}, {\it \"Uber eine Klasse der Mittel werte}, Japanese J.
 Math. {\bf 7}, 71--79, (1930).

\bibitem{SN}
 {\sc S. Nakamoto}, {\it Bitcoin: A peer-to-peer electronic cash system},
 https://bitcoin.org/bitcoin.pdf, (2008).

\bibitem{papa18}
 {\sc Z. P\'ales, P. Pasteczka}, {\it On the best Hardy constant for
 quasi-arithmetic means and homogeneous deviation means}, Math. Inequal.
 Appl. {\bf 21}, 585--599, (2018).

\bibitem{papa21}
 {\sc Z. P\'ales, P. Pasteczka}, {\it On the Jensen convex and Jensen
 concave envelopes of means}, Archiv der Mathematik {\bf 116}, 423--432,
 (2021).

\bibitem{ps}
 {\sc G. P\'olya, G. Szeg\"o}, {\it Aufgaben und Lehrs\"atze der Analysis},
 Springer, Berlin, (1925).

\bibitem{SZ}
 {\sc S. Schaible, W.T. Ziemba}. {\it Generalized Concavity in
 Optimization and Economics}. Academic Press. (1981).

%\bibitem{TTK}
% {\sc D.A. Tran, M.T. Thai, B. Krishnamachari}.  {\it Handbook on
% Blockchains}. Springer Optimization and Its Applications, Vol. {\bf 194}.
% Springer. (2022).

\bibitem{V}
 {\sc P. Virtanen, et al}. {\it SciPy 1.0: Fundamental Algorithms for Scientific
 Computing in Python}. Nature Methods, {\bf 17 (13) }, 261-272, (2020).

\bibitem{wright}
 {\sc E.M. Wright}, {\it Solution of the equation $ze^z=a$}, Bull. Amer. Math.
 Soc. {\bf 65}, 89--93, (1959).

\bibitem{zhao}
 {\sc Y.-B. Zhao, S.-C. Fang, D. Li}, {\it Constructing generalized mean
 functions using convex functions with regularity conditions}, SIAM J. Optim.
 {\bf 17}, 37--51, (2006).

\end{thebibliography}
\end{document}